\pgfplotsset{compat=1.3}
\newcounter{common}[section]
\numberwithin{equation}{section}
\newcommand{\nz}{\mathbb{N}}
\newcommand{\rz}{\mathbb{R}}
\newcommand{\intO}{\int_\Omega}
\newcommand{\dx}{\,\mathrm{d}x}
\newcommand{\dy}{\,\mathrm{d}y}
\newcommand{\RM}[1]{\MakeUppercase{(\romannumeral #1{})}}
\newcommand{\RMn}[1]{\MakeUppercase{\romannumeral #1{}}}
\newcommand{\HO}{H^{1,2}(\Omega)}
\newcommand{\HhO}{\boldsymbol{H}^{1,2}(\Omega)}
\newcommand{\HOtwo}{H^{2,2}(\Omega)}
\newcommand{\HOzero}{H_0^{1,2}(\Omega)}
\newcommand{\HhOzero}{\boldsymbol{H}_0^{1,2}(\Omega)}
\newcommand{\HOtwozero}{H^{2,2}(\Omega)\cap H_0^{1,2}(\Omega)}
\newcommand{\LOtwo}{L^2(\Omega)}
\newcommand{\LOinf}{L^\infty(\Omega)}
\newcommand{\LlOinf}{\boldsymbol{L}^\infty(\Omega)}
\newcommand{\COinfzero}{C_0^\infty(\Omega)}
\newcommand{\Ltwo}{L^2}
\newcommand{\LOthree}{L^3(\Omega)}
\newcommand{\Lltwo}{\boldsymbol{L}^2}
\newcommand{\LlOtwo}{\boldsymbol{L}^2(\Omega)}
\newcommand{\Lsix}{L^6}
\newcommand{\LOsix}{L^6(\Omega)}
\newcommand{\Llsix}{\boldsymbol{L}^6}
\newcommand{\Hone}{H^1}
\newcommand{\Htwo}{H^2}
\newcommand{\HhOtwo}{\boldsymbol{H}^{2,2}(\Omega)}
\newcommand{\Linf}{L^\infty}
\newcommand{\Llinf}{\boldsymbol{L}^\infty}
\newcommand{\Vv}{\mathbf{V}}
\newcommand{\Hh}{\mathbf{H}}
\newcommand{\ceff}{c}
\newcommand{\keff}{k(\phi)}
\newcommand{\keffmep}{m_\varepsilon*E(k(\phi)) }
\newcommand{\muu}{\mu(\phi)}
\newcommand{\Eeu}{\mep*\mathbf{u}}
\newcommand{\strong}{\rightarrow}
\newcommand{\weak}{\rightharpoonup}
\newcommand{\phit}{\tilde{\phi}}
\newcommand{\phizt}{\tilde{\phi}_0}
\newcommand{\phid}{\phi_D}
\newcommand{\phiz}{\phi_0}
\newcommand{\etat}{\tilde{\eta}}
\newcommand{\etae}{\eta^\varepsilon}
\newcommand{\rhoe}{\rho^\varepsilon}
\newcommand{\Tt}{\tilde{T}}
\newcommand{\phie}{\phi^\varepsilon}
\newcommand{\Te}{T^\varepsilon}
\newcommand{\mep}{m_\varepsilon}
\newcommand{\he}{h_\varepsilon}
\newcommand{\uu}{\mathbf{u}}
\newcommand{\uue}{\mathbf{u}^\varepsilon}
\newcommand{\uut}{\mathbf{\tilde{u}}}
\newcommand{\vv}{\mathbf{v}}
\newcommand{\ee}{\mathbf{e}}
\newcommand{\jjp}{\mathbf{j}_p}
\newcommand{\jj}{\mathbf{j}}
\newcommand{\jje}{\mathbf{j^\varepsilon}}
\newcommand{\gb}{\mathbf{g}}
\newcommand{\Ee}{\boldsymbol{E}}
\newcommand{\material}[1]{\frac{\text{D} #1}{\text{D}\,t}}
\newcommand{\Tinf}{T_\infty}
\newcommand{\bit}{\begin{itemize}}
\newcommand{\eit}{\end{itemize}}
\newcommand{\beq}{\begin{equation}}
\newcommand{\eeq}{\end{equation}}
\newcommand{\supp}{\mathrm{supp}\,}
\newcommand*\Div{\nabla\cdot}
\newtheorem{lemma}{Lemma}[section]
\newtheorem{theorem}[lemma]{Theorem}
\newtheorem{definition}[lemma]{Definition}
\newtheorem{remark}[lemma]{Remark}
\newtheorem{example}[lemma]{Example}
\newcommand{\norm}[1]{\left\lVert#1\right\rVert}
\newcommand{\normLO}[1]{\left\lVert#1\right\rVert_{L^2(\Omega)}}
\newcommand{\normL}[1]{\left\lVert#1\right\rVert_{L^2}}
\newcommand{\normLsix}[1]{\left\lVert#1\right\rVert_{L^6}}
\newcommand{\normLOsix}[1]{\left\lVert#1\right\rVert_{L^6(\Omega)}}
\newcommand{\normLOthree}[1]{\left\lVert#1\right\rVert_{L^3(\Omega)}}
\newcommand{\normLOfour}[1]{\left\lVert#1\right\rVert_{L^4(\Omega)}}
\newcommand{\normLOinf}[1]{\left\lVert#1\right\rVert_{L^\infty(\Omega)}}
\newcommand{\normHO}[1]{\left\lVert#1\right\rVert_{H^{1,2}(\Omega)}}
\newcommand{\normHone}[1]{\left\lVert#1\right\rVert_{H^{1,2}}}
\newcommand{\normHOtwo}[1]{\left\lVert#1\right\rVert_{H^{2,2}(\Omega)}}
\newcommand{\normV}[1]{\left\lVert#1\right\rVert_{\boldsymbol{V}}}
\newcommand{\mat}[1]{\frac{\text{D} #1}{\text{D}  t}}
\tikzset{%
    body/.style={inner sep=0pt,outer sep=0pt,shape=rectangle,draw,thick,pattern=north east lines wide},
    dimen/.style={<->,>=latex,thin,every rectangle node/.style={fill=white,midway,font=\sffamily}},
    symmetry/.style={dashed,thin},
}
\newcommand\getcurrentref[1]{%
 \ifnumequal{\value{#1}}{0}
  {??}
  {\the\value{#1}}%
}
    \numberwithin{common}{section}
    \numberwithin{common}{section}
\renewenvironment{proof}[1][\proofname]{\par
  \pushQED{\qed}%
  \normalfont \topsep6\p@\@plus6\p@\relax
  \trivlist
  \item[\hskip\labelsep
         \underline{\textbf{#1:}}]
}{
  \popQED\endtrivlist\@endpefalse
}
    \numberwithin{common}{section}
    \numberwithin{common}{section}
\newcommand{\pedrop}[1]%
{\par\medskip\noindent\centerline{\fbox{\parbox{.97\textwidth}{\medskip\centerline{\parbox{.95\textwidth}{\color{magenta} #1}}\medskip}}}\medskip\\}
\newcommand{\Cle}{\lesssim}
\begin{document}

\title{Convective transport in nanofluids: \\ 
   the stationary problem}

\author{Eberhard B\"ansch, Sara Faghih--Naini and Pedro Morin}

\date{\today}

\maketitle 

\begin{abstract}
We analyze the existence of solutions to the stationary problem from a mathematical model 
for convective transport in nanofluids including
thermophoretic effects that is very similar to the 
celebrated model of Buongiorno \citep{Buongiorno2006}. 

\end{abstract}

{\bf Keywords:} Nanofluid, thermophoresis, heat transfer,
energy estimate, weak solution 
    

\section{Introduction}

Nanofluids are engineered colloids, invented in the 1990s by Choi and Eastmann 
\citep{Choi1995} and consist of a base fluid and a small 
fraction of nanoparticles, usually below five percent. 
Compared to their base fluids, nanofluids show the following behavior (see  \citep{Buongiorno2006,BaenschFaghih2019} for details):
\begin{compactitem} 
\item Much higher and strongly temperature-dependent thermal conductivity,
\item Heat transfer coefficient increase going beyond the mere thermal 
conductivity effect, 
\item Abnormal viscosity increase,
\item High dispersion stability arising from Brownian motion.
\end{compactitem}
Compared to conventional solid-liquid suspensions with larger particles, nanofluids show a higher 
potential for increasing the heat transfer due to the larger surface between the phases. 
This has motivated their use in numerous cooling or heat exchanging devices~\citep{Saidur2011}.

In the last decades, there were several attempts to develop convective transport models for nanofluids (see \citep{BaenschFaghih2019} and the references therein). 
To overcome some shortcomings of the earlier models, J. Buongiorno \citep{Buongiorno2006} introduced an alternative model in 2006, which has become very popular.  In Sec.\ 2 of his work, he used scaling arguments to conclude 
that Brownian diffusion and thermophoresis are the only important slip mechanisms in laminar 
forced convection of nanofluids. He furthermore estimated the relative importance 
of thermophoresis to be up to two orders of magnitude higher than that of 
Brownian diffusion.

The resulting model is a \textit{two-component four-equation non-homogeneous equilibrium model} and consists 
of a system of partial differential equations (PDEs) for the volumetric fraction of the particles, 
the thermal energy and the momentum of the averaged flow field. The volumetric fraction of the liquid 
phase is determined by the one of the particles of course, and it is assumed that there is a 
common temperature because of thermal equilibrium between the particles and the base fluid.

We will now take a closer look at Brownian diffusion and thermophoresis, 
since they are responsible for the diffusive mass flux in the continuity equation for the nanoparticles in Sec.\ \ref{mass_eq}.
\vspace{0.2cm}\\
\textbf{Brownian diffusion}
\vspace{0.2cm}\\
The random motion of nanoparticles within the base fluid that results from frequent collisions between the nanoparticles and the molecules of the base fluid is called Brownian motion \citep{Buongiorno2006}. It
is described by the Brownian diffusion coefficient $D_B$, which is given by the Einstein-Stokes 
equation \citep{Einstein:1905} as $D_B = \frac{k_B T}{3 \pi \mu_f d_p} $ with Boltzmann constant $k_B = 1.38^{ - 23 }\SI {}{\frac{m^2\, kg}{s^2\,K}}$, absolute temperature $T$, base fluid dynamic viscosity $\mu_f$ and particle diameter $d_p$. The  nanoparticle mass flux due to Brownian diffusion can be modeled as
\begin{align}\label{diff_brown}
\boldsymbol{ j_{p,B}}=-\rho_p D_B \nabla \phi \quad \big[\SI {}{\frac{kg}{m^2\,s}}\big]
\end{align}
with particle density $\rho_p$ and volumetric fraction of particles $\phi$.
\vspace{0.2cm}\\
\textbf{Thermophoresis}
\vspace{0.2cm}\\
The diffusion of particles under the effect of a temperature gradient is called thermophoresis. It is the ``particle equivalent'' of the well-known Soret effect \citep{MCNab1973} for gaseous or liquid mixtures. The thermophoretic velocity can be computed as $\boldsymbol{V_T}=-\beta\frac{\mu_f}{\rho_f}\frac{\nabla T}{T}$ with base fluid density $\rho_f$ and proportionality factor $\beta =0.26\frac{k_f}{2k_f+k_p}$ according to \citep{MCNab1973}. $k_f$ and $k_p$ are the thermal conductivity of the base fluid and the particle material, respectively. The negative sign indicates that particles move in direction opposite to the temperature gradient, i.e.\ from hot to cold regions. The  nanoparticle mass flux due to thermophoretic effects can be calculated by
\begin{align}\label{diff_therm}
\boldsymbol{j_{p,T}}= \rho_p\phi \boldsymbol{V_T}= -\rho_p D_T \phi\frac{\nabla T}{T} \quad \big[\SI {}{\frac{kg}{m^2\,s}}\big]
\end{align}
with thermal diffusion coefficient $D_T = \beta \frac{\mu_f}{\rho_f}$ \citep{Buongiorno2006}.

To adhere to the notation in \citep{Baensch2018}, in the following, the mass flux will be defined without the particle density $\rho_p$, consequently, we do not need to divide by $\rho_p$ in the continuity equation for the nanoparticles and the total mass flux $\jjp$ will be in $\big[\SI {}{\frac{m}{s}}\big]$. Furthermore, deviating from the notation in the literature, the diffusion coefficient $D_T$ does not contain the concentration $\phi$ in our work. \\

In recent years, there have been numerous numerical studies on convective heat transfer in nanofluids 
(see for instance the review \citep{Vanaki2016}), many of them are based on the Buongiorno model. By revisiting the derivation of the model, it was established that the original one is not thermodynamically consistent. 
Thus a slightly different, thermodynamically consistent model was presented in \citep{Baensch2018}. For this model, energy estimates and the existence of a weak solution can be shown.

The aim of this article is to study
the existence of \emph{stationary} solutions of this
thermodynamically consistent model. Interestingly, it turns out that the proof of existence of stationary
solutions is technically more demanding than for the instationary case. 
Some of the arguments from our proofs will be useful for obtaining further regularity results, which are in turn necessary for developing error estimates, which will be published in a forthcoming paper.

The rest of this article is organized as follows. 
In Section \ref{S:model} the modified Buongiorno model is presented. The notion of a 
weak solution to the stationary version of the model is given in Section \ref{S:weak}.
Existence of a weak solution is proved in Section \ref{S:existence}.

\section{The model}\label{S:model}
In this section, we present the model derived in \citep{Baensch2018} for convective transport in nanofluids 
and make some comments on heat transfer mechanisms. The model is based on similar assumptions as those in \citep{Buongiorno2006}:
\begin{itemize}
\item \textit{Incompressible flow}.
\item \textit{The particles and the base fluid are locally in thermal equilibrium}. Since the heat transfer timescales for heat conduction within the nanoparticles and within the base fluid in the vicinity of the nanoparticles are much smaller than the time scales calculated for Brownian diffusion and thermophoresis and also smaller than the possibly turbulent eddy time scale, nanoparticles moving in the surrounding fluid achieve thermal 
equilibrium with it very rapidly.
\item \textit{No chemical reactions} due to the chemical inertness of nanoparticles with the base fluid.
\item \textit{Negligible viscous dissipation}, since the applied heat flux is the dominant energy source in the system.
\item \textit{Negligible radiative heat transfer} because of the relatively low temperatures.
\end{itemize}
In contrast to \citep{Buongiorno2006}, we allow for  
Buoyancy effects through an outer force given by Boussinesq approximation.

Based on these assumptions Buongiorno derived a four-equations non-homogeneous equilibrium model from
balance equations for particle and fluid mass, thermal energy and momentum. In \citep{Baensch2018} this model
was modified in order to get a thermodynamically consistent system.
\subsection{Balance of mass} \label{mass_eq}
The evolution of the volumetric fraction of particles $\phi=\phi(t,x)$ is described by $\mat{\phi} = -  \Div \jjp$~(\citep{Bird1960} Eq.\ 18.3-4) where $\mat{}$ denotes the material derivative $\partial_t +\uu \cdot \nabla$, with $\uu$ the volume averaged velocity described below. 
The flux function $\jjp$, according to \citep{Baensch2018}, is given by
$\jjp= - D_B\nabla \phi - \phi (1-\phi)D_T \frac{\nabla T}{T}$.
As in most practical applications the absolute temperature is much greater than the temperature difference, the denominator $T$ was fixed in \citep{Baensch2018} at a given value $\Tinf >0$,
arriving to the following equation for the balance of particle mass:
\begin{align} \label{concentration}
\mat{\phi} =-  \Div \jjp
           = \Div\Big( D_B\nabla \phi + \phi (1-\phi)D_T \frac{\nabla T}{\Tinf}\Big).
\end{align}

\subsection{Balance of thermal energy}\label{balance_thermal_energy}
If $h =\ceff\rho T$ denotes the enthalpy of the mixture with effective specific heat capacity $\ceff$ and actual density $\rho$, the balance of thermal energy \citep{Baensch2018} reads
\begin{align}
\mat{h} + \Div \big((h_p-h_f) \jjp \big) - \keff\nabla T)=0,
\end{align}
with the subscripts $\cdot_f, \cdot_p$ indicating properties of
the base fluid and the particle phase, respectively. 
The actual density is $\rho = \phi\rho_p + (1-\phi)\rho_f$ and
$\ceff$ is given
as the density weighted heat capacity of the single phases~\citep{Buongiorno2006}:
$\ceff = \big(\phi c_p\rho_p + (1-\phi)c_f\rho_f\big)/\rho$,
which is also assumed in \citep{Baensch2018} and hereafter. The enthalpy of the nanoparticles and the base fluid are given by $h_p =\ceff_p\rho_p T$ and $h_f =\ceff_f\rho_f T$ respectively, whereas
$\keff$ is the concentration-dependent thermal conductivity.
Inserting these expressions and using the abbreviation $\alpha_\eta \coloneqq c_p\rho_p - c_f\rho_f$ yields
\[
\material{h} + \Div(T(c_p\rho_p - c_f\rho_f)\jj_p - \keff\nabla T)=
\material{h} + \alpha_\eta\Div (T\jj_p - \keff\nabla T)=0.
\]
The final form, which will be used in the following, is obtained by defining $\eta:=\rho \ceff = c_f \rho_f + \phi \alpha_\eta\eta$:
\begin{align}
\material{(\eta T)} + \alpha_\eta\Div(T\jj_p) -\Div(\keff\nabla T) =
0.\label{heat}
\end{align}

\subsection{Balance of momentum}
According to \citep{Abels2012} and as stated in \citep{Baensch2018} the \textit{volume averaged} velocity $\uu$ 
fulfills the momentum equation
\[
\material{(\rho\uu)} + \alpha_\rho\Div(\uu \otimes \jj_p) -\Div(\muu D(\uu)) +\nabla p = -\beta T \ee_g
\]
with concentration-dependent viscosity $\mu(\phi)$, pressure $p$, strain rate tensor 
$D(\uu) := \nabla \uu + \nabla \uu^T$, buoyancy coefficient $\beta \ge 0$ with $\ee_g$ a unit vector pointing in the direction of the gravity force and the difference of the densities 
$\alpha_\rho \coloneqq \rho_p-\rho_f$. 
Moreover, $\uu$ is solenoidal (\citep[Eq.\ 18.1-9]{Bird1960}), i.e., 
$ \Div \uu =0$.

\subsection{Difference to Buongiorno model}\label{difference}
The energy equation~\eqref{heat} (and similarly the momentum equation)
can equivalently be written in a non-conservative form as
\begin{align}
\ceff\rho\mat{T} - \Div(\keff\nabla T) = \alpha_\eta\big(D_B \nabla T\cdot \nabla\phi + 
D_T \phi(1-\phi) \frac{|\nabla T|^2}{\Tinf}\big).
\label{heat2}
\end{align}
As noted in \citep{Baensch2018}, this equation is of the same form as the heat equation derived in \citep{Buongiorno2006} except
for the factors $(1-\phi)$ in the definition of $\jj_p$
and $\alpha_\eta= c_p\rho_p - c_f\rho_f$  instead of $c_p\rho_p$. In \citep{Buongiorno2006} also the term
$\alpha_\rho\Div(\uu \otimes \jj_p)$ is neglected in the momentum equation.

These small modifications (being of minor significance in most applications) to the
model by Buongiorno make the difference that the model from \citep{Baensch2018} is 
thermodynamically consistent and in turn enjoys provable stability estimates from which
also existence of solutions can be rigorously shown.

\subsection{Stationary problem}
In this article we are interested in stationary solutions to the aforementioned model, i.e., triples $\phi, \, T,\, \uu$ and $p$, such that, in $\Omega$,
\begin{align}
\uu \cdot \nabla \phi + \nabla \cdot \jj_p &=0, \label{stationary_strong_phi}\\
\uu \cdot \nabla (\eta T )+ \nabla \cdot (\alpha_\eta T \jj_p - \keff \nabla T)&=f,\label{stationary_strong_T}\\
\uu \cdot \nabla (\rho \uu) +\alpha_\rho \nabla \cdot (\uu \otimes \jj_p - \muu D(\uu)) +\nabla p + \beta T \ee_g&=\gb,\label{stationary_strong_u}\\
\nabla \cdot \uu &=0 \label{stationary_strong_div_u}
\end{align}
where $\jj_p= - \nabla \phi - h(\phi) \frac{\nabla T}{T_\infty}$ and 
\begin{equation} \label{h_def}
h(z) = z^+ (1-z)^+\qquad \text{with}\quad z^+ = \max\{0,z\}.
\end{equation}
We consider the following boundary conditions:
\begin{align} \label{boundary}
\phi=b , \quad T=0, \quad \uu=\boldsymbol{0} \qquad \text{ on } \partial \Omega,
\end{align}
where we assume $0\leq b \leq 1$. Other boundary conditions could be treated as well. However, for the
ease of presentation we choose purely Dirichlet conditions.
Suitable assumptions on the problem data $f$, $\gb$ and $b$ will be stated in the next section.  
Here, we have added nonzero source terms to the temperature and momentum equations in order to 
get non-trivial solutions.

\section{Weak solutions to the stationary problem}\label{S:weak}

In this section we show existence of weak solutions to the stationary problem. 
Because of the rather strong nonlinearity we first have to regularize the problem by mollification.
We use a fixed-point argument to infer that there exist  solutions $(\phie, \Te, \uue)$ 
to this regularized system (with $\varepsilon$ denoting the regularization parameter), and then show 
that these solutions converge to  a solution $(\phi, T, \uu)$ of the original system as $\varepsilon \strong 0$.

\subsection{Notation and assumptions}\label{S:NotationAssumptions}

We will use the following notation:

\begin{itemize}

\item $\Vv:=\{\vv\in \HhOzero | \Div \vv =0\}$, where $H_0^{1,2}(\Omega)$ is the closure of $C^\infty_0(\Omega)$ (the space of test functions) in $H^{1,2}(\Omega)$, the latter being the space of weakly differentiable functions with square integrable first order derivatives; $\HhOzero = \big(H_0^{1,2}(\Omega)\big)^d$.

\item  $\Hh$ is defined as the closure of $\Vv$ in $\Lltwo$.

\item The Euclidean scalar product of vectors $\boldsymbol{a}$ and $\boldsymbol{b} \in \rz^d$ is $\boldsymbol{a} \cdot \boldsymbol{b}$. The analog for matrices $\boldsymbol{A}, \boldsymbol{B} \in \rz^{d_1\times d_2}$ is $
\boldsymbol{A} : \boldsymbol{B} = \sum_{i=1}^{d_1}\sum_{j=1}^{d_2} \boldsymbol{A}_{i,j}\,\boldsymbol{B}_{i,j}.$

\item The tensor product $\boldsymbol{a}\otimes \boldsymbol{b}\in\rz^{n\times m}$ of 
two vectors $\boldsymbol{a}\in \rz^n$ and $\boldsymbol{b}\in \rz^m$ is defined as 
$$(\boldsymbol{a}\otimes \boldsymbol{b})_{i,j} = a_ib_j,\qquad i=1,\dots n; \quad j=1,\dots m.$$
\end{itemize}

We make the following assumptions:

\begin{itemize}
	\item $\Omega\subseteq \rz^d$ is an open, bounded domain 
	with a globally Lipschitz boundary, and satisfies a uniform exterior sphere condition,
	for $d=2$ or $3$. This is condition is for instance fulfilled for convex domains. Otherwise,
        the ``concave parts'' of the boundary have to be smooth ($C^2$).
	\item $k, \mu : [0,1] \rightarrow \rz$ are Lipschitz continuous and  $\keff \geq k_0 >0$, $\muu \geq \mu_0 >0$.
	\item $f,\gb \in L^2(\Omega)$.
	\item Boundary data $b$ can be extended to a function $\phid\in H^{2,2}(\Omega)$
          with $\Delta \phid=0$.
\end{itemize}

We will need the following regularity result. Although it seems fairly standard, 
we give a proof here for completeness.

\begin{lemma}\label{Lemma:regularity}
Let $6/5 \leq p \le 2$, and
let $f \in L^p(\Omega)$ so that $f\in H^{-1,2}(\Omega)$. 
Furthermore let $a \in C^{0,1}(\Omega)$, with $0< a_0 \le a \le a_\infty$ in $\Omega$. 
Then the unique weak solution $\zeta \in H_0^1(\Omega)$ of 
$$
-\Div(a \nabla \zeta) = f \quad\text{in } \Omega, \quad\text{and } \zeta =0 
 \quad\text{on }\partial\Omega
$$ 
satisfies $\zeta \in H^{2,p}(\Omega)$ with $\| \zeta \|_{H^{2,p}(\Omega)} \le C \| f\|_{L^p(\Omega)}$, and $C = C(a,p,\Omega)$. 
\end{lemma}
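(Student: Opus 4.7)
\emph{Plan.} The idea is to reduce the variable-coefficient divergence equation to a constant-coefficient Poisson problem and then appeal to elliptic regularity for the Dirichlet Laplacian on the geometric class fixed in Section~\ref{S:NotationAssumptions}.

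First I would establish existence and the basic $H^{1,2}$ bound. Because $p\ge 6/5$ and $d\le 3$, Sobolev duality gives $L^p(\Omega)\hookrightarrow H^{-1,2}(\Omega)$, so the bilinear form $(u,v)\mapsto\int_\Omega a\,\nabla u\cdot\nabla v$ is bounded and coercive on $H_0^{1,2}(\Omega)$ thanks to $0<a_0\le a\le a_\infty$. Lax--Milgram then yields a unique $\zeta\in H_0^{1,2}(\Omega)$ with $\|\nabla\zeta\|_{L^2}\le C\|f\|_{L^p}$.

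Next I would recast the equation as a Poisson problem with $L^p$ right-hand side. Taking $v=\psi/a$ with $\psi\in C_0^\infty(\Omega)$ as a test function (admissible since $a$ is Lipschitz and bounded away from $0$, so $v\in H_0^{1,2}(\Omega)$), and expanding $\nabla v$, the identity $\int_\Omega a\,\nabla\zeta\cdot\nabla v\,dx=\int_\Omega fv\,dx$ becomes
\[
\int_\Omega \nabla\zeta\cdot\nabla\psi\,dx \;=\; \int_\Omega \frac{f+\nabla a\cdot\nabla\zeta}{a}\,\psi\,dx,
\]
so that $-\Delta\zeta=g$ in the distributional sense, with $g:=(f+\nabla a\cdot\nabla\zeta)/a$. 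Since $\nabla a\in L^\infty$, $\nabla\zeta\in L^2(\Omega)\hookrightarrow L^p(\Omega)$ (using $p\le 2$ and boundedness of $\Omega$), and $a\ge a_0$, one has
\[
\|g\|_{L^p(\Omega)} \;\le\; a_0^{-1}\bigl(\|f\|_{L^p(\Omega)} + |\Omega|^{1/p-1/2}\|\nabla a\|_{L^\infty}\|\nabla\zeta\|_{L^2}\bigr) \;\le\; C(a,\Omega,p)\|f\|_{L^p(\Omega)}.
\]

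Finally, I would invoke the $W^{2,p}$ regularity theorem for the Dirichlet Laplacian: on a bounded Lipschitz domain with uniform exterior sphere condition (and the stated $C^2$ regularity on its concave parts), the unique solution of $-\Delta\zeta=g\in L^p(\Omega)$, $\zeta=0$ on $\partial\Omega$, satisfies $\zeta\in H^{2,p}(\Omega)$ for every $1<p\le 2$ with $\|\zeta\|_{H^{2,p}}\le C(\Omega,p)\|g\|_{L^p}$. Combined with the previous $L^p$ bound for $g$, this yields the claim.

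\emph{Main obstacle.} The substantive step is the last one. The endpoint $p=2$ is essentially Grisvard's classical theorem, which rests on the integration-by-parts identity $\|D^2\zeta\|_{L^2}^2=\|\Delta\zeta\|_{L^2}^2+(\text{boundary terms})$ plus the fact that the boundary terms have the right sign under the exterior sphere hypothesis. The range $p<2$ is more delicate and requires either a duality argument exploiting the $p=2$ case together with a Wiener-type barrier provided by the exterior sphere condition, or a direct appeal to the sharp Adolfsson/Jerison--Kenig $L^p$-regularity theory for rough but geometrically tame domains; in a complete write-up I would isolate this as an auxiliary result, proving $p=2$ directly by the quadratic identity and deducing $p<2$ by duality against a dense subset of $L^{p'}$.
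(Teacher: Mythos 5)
Your proposal is correct and follows essentially the same route as the paper: establish the $H^{1,2}$ bound via Lax--Milgram, rewrite the equation as $-\Delta\zeta=(f+\nabla a\cdot\nabla\zeta)/a$ with right-hand side in $L^{p}(\Omega)$, and then invoke $H^{2,p}$ regularity for the Dirichlet Laplacian on the stated class of domains. The only difference is that the paper disposes of the final (and, as you rightly note, substantive) step by citing Corollary~1 of Fromm's result for domains satisfying a uniform exterior sphere condition, $1<p\le 2$, rather than reproving it as you sketch.
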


\begin{proof}
Clearly, $\zeta$ fulfills the estimate
$$
 \| \zeta\|_{H^{1,2}(\Omega)} \leq C\|f\|_{L^p(\Omega)}.
$$
Also, $\zeta\in H^{1,2}_0(\Omega)$ is the weak solution to
$$
-\Delta \zeta = \underbrace{\frac{1}{a}(f + \nabla a\cdot\nabla\zeta)}_{=:\tilde f}
\quad\text{in } \Omega, \quad\text{and } \zeta =0 
\quad\text{on }\partial\Omega.
$$
Since $\zeta\in H^{1,2}(\Omega)$,  it follows that $\tilde{f}\in L^{\min{\{p,2\}}}(\Omega)$.
The regularity result from Corollary~1 in~\cite{Fromm93} (see also Remark on top of page 232 in~\cite{Fromm93}) readily implies
that $\zeta\in H^{2,p}(\Omega)$ and $\| \zeta \|_{H^{2,p}(\Omega)} \le C \| f\|_{L^p(\Omega)}$.
\end{proof}

Hereafter, $A \Cle B$ denotes $A \le C B$ with a constant $C$ that depends only on the domain $\Omega$ and the fixed parameters of the problem, but not on the problem data or solution.

\subsection{Weak formulation of the stationary problem}

We now assume, without loss of generality, that $T_\infty=1$, $\alpha_\eta=\alpha_\rho = 1$, $\eta = 1+\phi$
and $\rho = 1 + \phi$.
\begin{definition}[Weak solution to the stationary problem]\label{Def_weak}
Given right hand sides $f\in\Ltwo$, $\gb\in\Hh$,
a weak solution to problem \eqref{stationary_strong_phi}, \eqref{stationary_strong_T}, 
\eqref{stationary_strong_u} and \eqref{stationary_strong_div_u} with boundary conditions 
\eqref{boundary} is a triple $(\phi, T, \uu)$ 
with $\phi \in \HO$, $\phi-\phid\in \HOzero$, $T \in \HOzero$, $\uu \in \Vv$  
and  $0\leq \phi \leq 1$ a.e. fulfilling
\begin{align}
\intO \uu\cdot\nabla\phi \psi-\jj_p \cdot \nabla \psi \dx&=0, \label{phi_orig} \\[1ex]
\intO \uu\cdot \nabla(\eta T)\varphi  + \big(-T \jj_p+\keff\nabla T\big) \cdot \nabla \varphi \dx&=
         \intO f \varphi \dx,\label{T_orig}\\[1ex]
\intO \uu \cdot \nabla(\rho\uu) \cdot \vv -\uu \otimes\jj_p:\nabla \vv \dx + \intO \frac{\muu}{2} 
D(\uu) : D(\vv) \dx
+\intO \beta T \ee_g \cdot \vv \dx &=\intO \gb \cdot \vv \dx\label{u_orig}
\end{align}
with
$ \jj_p= - \nabla \phi - h(\phi) \nabla T$,
for all $\psi, \, \varphi \in C^\infty_0(\Omega), \, \vv \in  \boldsymbol{C}^\infty_0(\Omega) \text{ with } \Div \vv =0$.
\end{definition}

The mathematical challenge of the above system lies in the rather strong nonlinearity
given by the terms $\Div (T\jjp)$ and $\Div(\uu\otimes \jjp)$ that give rise
to expressions like $|\nabla T|^2$. Fortunately, the system exhibits an energy
estimate in $H^{1,2}\times H^{1,2}\times \Vv$.
To exploit this structure, we need to regularize the system and then
pass to the limit.
\subsection{Mollification}

As already mentioned, in order to prove existence of solutions
we need to regularize certain terms in the equations.
This will be done by mollification. To this end, in what follows, functions with zero boundary values
on $\partial\Omega$ are extended by zero outside of $\Omega$ without further noting.
\begin{definition}{Mollifier and convolution (\citep[App. C.5]{Evans2010})}
Define $m \in C^\infty(\rz^d)$ by 
$m(x)=C \exp\left(\frac{1}{|x|^2-1}\right)$ for $|x|< 1$, and $m(x) = 0$ otherwise,
with the constant $C$ such that $\int_{\rz^d}m\dx=1$.
For each $\varepsilon>0$ set 
$\mep(x)\coloneqq m(x/\varepsilon)/\varepsilon^n$.
The functions $\mep$ are thus in $C^\infty(\rz^d)$ and $\int_{\rz^d}\mep\dx=1$.
The $\varepsilon$-mollification of a function $f \in L^p(\rz^d)$ is defined as the convolution with this mollifier:
\begin{align*}
f^\varepsilon(x)=(\mep * f)(x) \coloneqq \int_{\rz^n} \mep(x-y)f(y) \dy.
\end{align*}
\end{definition}

\begin{lemma}[Properties of mollifiers and convolutions]

\label{Lemma_convolution_mollifier}
For a convolution of a function $f \in L^p(\rz^n)$ with a mollifier $\mep$ the following properties hold for all $1\leq p < \infty$, \citep[App. C3, Ch.\ 5.3]{Evans2010}:
\begin{enumerate}[(i)]
\item $ \mep * f = f * \mep \in C^\infty(\rz^n)$, and
      $\norm{f-\mep *f}_{L^p(\Omega)} \strong 0 \text{ as } \varepsilon \strong 0$,

\item If $f \in H^{1,p}(\Omega)$: $\norm{\nabla(f-\mep *f)}_{L^p(\Omega)} \strong 0 \text{ as } \varepsilon \strong 0$,

\item $\| \mep * f \|_{L^p(\Omega)}\leq \|f\|_{L^p(\rz^n)} $,

\item the classical $\alpha^{th}$-partial derivative $D^\alpha$ of the smooth function $f_\varepsilon$ 
equals the $\varepsilon$-mollification of the weak $\alpha^{th}$-partial derivative of $f$ and it is also equal to the $\alpha^{th}$-partial derivative of the mollifier convoluted with $f$, i.e.\ 
$ D^\alpha(\mep *f)=(D^\alpha \mep) *f= \mep *D^\alpha f$,

\item\label{Lemma_convolution_mollifier.v} $\norm{\mep*f}_{H^{k,q}(\Omega)}\leq C_\varepsilon(k,q)\norm{f}_{L^p(\rz^n)}$ for $0 \leq k<  \infty$, $1 \leq q \leq \infty$.
\end{enumerate}
\end{lemma}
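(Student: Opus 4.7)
The plan is to verify the five listed properties one at a time, observing that items (i) through (iv) are essentially verbatim restatements of the classical convolution results already recorded in Evans (Appendix C, Theorems 6--7 of Chapter 5.3), so the bulk of the work is to cite them correctly and to check the slight generalization in item (v).

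For (i), I would note that smoothness of $m_\varepsilon * f$ follows from differentiation under the integral sign combined with the $C^\infty$ character of $m_\varepsilon$; the $L^p(\Omega)$ convergence $\|f - m_\varepsilon*f\|_{L^p(\Omega)} \to 0$ is the standard approximation of the identity statement, proved by density of $C_c(\rz^n)$ in $L^p$ plus uniform continuity for continuous representatives. Item (ii) is then (i) applied componentwise to $\nabla f$, together with the commuting relation from (iv). For (iii), Young's inequality for convolutions gives $\|m_\varepsilon * f\|_{L^p(\Omega)} \le \|m_\varepsilon * f\|_{L^p(\rz^n)} \le \|m_\varepsilon\|_{L^1(\rz^n)}\|f\|_{L^p(\rz^n)}$, and the normalization $\int_{\rz^d} m_\varepsilon\,\mathrm{d}x = 1$ closes the estimate. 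Item (iv) is proved in two steps: differentiating under the integral sign transfers $D^\alpha$ onto $m_\varepsilon$, while an integration by parts (using that $m_\varepsilon$ is compactly supported and $f$ is extended by zero outside $\Omega$ so that weak derivatives are well defined globally) transfers $D^\alpha$ onto $f$.

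Item (v) is the only item that is not stated explicitly in Evans in this exact form, and it is where I would expect the small technical obstacle: one has to admit an $\varepsilon$-dependent constant. Starting from (iv), write $D^\alpha(m_\varepsilon * f) = (D^\alpha m_\varepsilon) * f$. Another application of Young's inequality yields
\begin{align*}
\|D^\alpha(m_\varepsilon * f)\|_{L^q(\Omega)} \le \|D^\alpha m_\varepsilon\|_{L^r(\rz^d)}\, \|f\|_{L^p(\rz^d)},
\qquad \tfrac{1}{r} = 1 + \tfrac{1}{q} - \tfrac{1}{p},
\end{align*}
which is permissible since $1\le p\le \infty$ and $1\le q\le \infty$ with $r\ge 1$; if $p>q$ one first exploits that $f$ has support of finite measure or restricts to $\Omega$, picking up another factor depending on $|\Omega|$. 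A direct rescaling $m_\varepsilon(x) = \varepsilon^{-d} m(x/\varepsilon)$ gives $\|D^\alpha m_\varepsilon\|_{L^r(\rz^d)} = \varepsilon^{-|\alpha| - d + d/r}\|D^\alpha m\|_{L^r(\rz^d)}$. Summing over $|\alpha|\le k$ produces the claimed constant $C_\varepsilon(k,q)$, which is finite for each fixed $\varepsilon>0$ (and blows up as $\varepsilon\to 0$, as expected).

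The only real subtlety is therefore (v), and even there the obstacle is bookkeeping of exponents and the rescaling of derivatives of $m$; no deep tool beyond Young's inequality and differentiation under the integral is needed.
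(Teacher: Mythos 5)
Your proposal is correct and matches the paper's treatment: the paper gives no written proof of this lemma, relying on the citation to Evans for (i)--(iv), and your filling-in of those items (differentiation under the integral, density, Young's inequality with $\int_{\rz^d} m_\varepsilon \,\mathrm{d}x=1$, integration by parts for commuting $D^\alpha$ with the convolution) is exactly the standard argument behind that citation. Your handling of (v) -- Young's inequality with the rescaling $\|D^\alpha \mep\|_{L^r(\rz^d)}=\varepsilon^{-|\alpha|-d+d/r}\|D^\alpha m\|_{L^r(\rz^d)}$, plus the finite-measure embedding on $\Omega$ when $q<p$ -- is the right way to obtain the $\varepsilon$-dependent constant $C_\varepsilon(k,q)$, including the case $q=\infty$, and is consistent with how the paper uses this item later.
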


The following property will be used later in the convergence proof:
\begin{lemma}{}
	If $f,g \in L^1(\Omega)$ and we extend them as zero to $\rz^d$, then
\begin{align}
\intO f(x)  (\mep *g)(x) \dx=\intO (\mep* f)(x) g(x) \dx \qquad\text{for all }\varepsilon > 0.
\end{align} 
\end{lemma}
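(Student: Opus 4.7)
The plan is to reduce the identity to a direct application of Fubini's theorem combined with the radial (hence even) symmetry of the mollifier. Since $f$ and $g$ are extended by zero outside $\Omega$, the integrals over $\Omega$ can be rewritten as integrals over $\mathbb{R}^d$, which makes the symmetric treatment of the two arguments transparent.

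First I would expand the convolution in the left-hand side and note that
$$
\int_\Omega f(x)(m_\varepsilon * g)(x)\,\mathrm{d}x
= \int_{\mathbb{R}^d}\int_{\mathbb{R}^d} f(x)\,m_\varepsilon(x-y)\,g(y)\,\mathrm{d}y\,\mathrm{d}x.
$$
To justify Fubini's theorem, I would observe that $m_\varepsilon \in L^\infty(\mathbb{R}^d)$ and therefore
$$
\int_{\mathbb{R}^d}\int_{\mathbb{R}^d} |f(x)|\,m_\varepsilon(x-y)\,|g(y)|\,\mathrm{d}y\,\mathrm{d}x
\le \|m_\varepsilon\|_{L^\infty(\mathbb{R}^d)}\,\|f\|_{L^1(\mathbb{R}^d)}\,\|g\|_{L^1(\mathbb{R}^d)} < \infty,
$$
so the double integral is absolutely convergent and the order of integration may be interchanged.

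Next I would use the evenness of $m_\varepsilon$, which follows from the fact that the defining function $m(x) = C\exp\bigl(1/(|x|^2-1)\bigr)$ for $|x|<1$ depends only on $|x|$, so $m_\varepsilon(x-y) = m_\varepsilon(y-x)$ for all $x,y\in\mathbb{R}^d$. Swapping the order of integration and using this symmetry yields
$$
\int_{\mathbb{R}^d}\int_{\mathbb{R}^d} f(x)\,m_\varepsilon(x-y)\,g(y)\,\mathrm{d}y\,\mathrm{d}x
= \int_{\mathbb{R}^d} g(y)\int_{\mathbb{R}^d} m_\varepsilon(y-x)\,f(x)\,\mathrm{d}x\,\mathrm{d}y
= \int_{\mathbb{R}^d} g(y)(m_\varepsilon * f)(y)\,\mathrm{d}y.
$$
Finally, since $g\equiv 0$ outside $\Omega$, the outer integral reduces to one over $\Omega$, giving the desired identity.

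There is no real obstacle here; the only subtlety is the justification of Fubini's theorem, which follows immediately from the $L^\infty$ bound on $m_\varepsilon$ and the $L^1$ hypothesis on $f$ and $g$, together with the observation that the mollifier is even. The argument is entirely elementary and occupies at most a few lines in a formal write-up.
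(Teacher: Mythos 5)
Your argument is correct and follows essentially the same route as the paper, which also proves the identity by extending $f,g$ by zero and applying Fubini's theorem. You additionally spell out the evenness of $m_\varepsilon$ (needed to rewrite $\int m_\varepsilon(x-y)f(x)\,\mathrm{d}x$ as $(m_\varepsilon*f)(y)$) and the absolute-convergence bound justifying Fubini, both of which the paper leaves implicit.
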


The proof of this result is straightforward using Fubini's theorem and the fact that the extended functions are zero outside of $\Omega$.

Whenever we apply the mollification to a function in $ f \in L^p(\Omega)$ we understand that it is applied 
to the natural extension by zero outside $\Omega$ so that $\mep* f$ is always defined on all of $\rz^n$. 
When $f \in \HOzero$, its extension by zero belongs to $H^{1,2}(\rz^n)$, but if $f \in \HO \setminus \HOzero$, 
we will apply the mollification to an extension through the following operator.

\begin{lemma}\label{L:extension}
	Under the assumptions made on $\Omega$ there exists a linear and bounded  extension operator 
     $E$ from $H^{1,2}(\Omega)$ to $H^{1,2}(\rz^n)$.
\end{lemma}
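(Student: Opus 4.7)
The plan is to prove this via the classical localization-and-reflection argument, which is standard for Lipschitz domains (this is essentially Calderón's extension theorem, or the $p=2$, $k=1$ case of Stein's).

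First, I would exploit the global Lipschitz property of $\partial\Omega$ to produce a finite open cover of $\overline\Omega$ consisting of an interior open set $U_0\Subset\Omega$ together with boundary patches $U_1,\dots,U_N$ such that, after a rigid rotation, $U_j\cap\Omega$ is the subgraph of a Lipschitz function $\gamma_j$ in suitable coordinates, i.e.\ $U_j\cap\Omega=\{x\in U_j : x_n < \gamma_j(x')\}$. Let $\{\zeta_j\}_{j=0}^N$ be a smooth partition of unity subordinate to this cover. For $f\in H^{1,2}(\Omega)$ I decompose $f=\sum_{j=0}^N \zeta_j f$. The interior piece $\zeta_0 f$ is extended trivially by zero to $\mathbb{R}^n$, which lies in $H^{1,2}(\mathbb{R}^n)$ because $\zeta_0$ has compact support inside $\Omega$.

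For each boundary piece, I would flatten the boundary by the bi-Lipschitz change of variables $\Phi_j(x',x_n)=(x', x_n-\gamma_j(x'))$, which maps $U_j\cap\Omega$ onto a subset of the lower half-space $\{y_n<0\}$ and $U_j\cap\partial\Omega$ onto $\{y_n=0\}$. Because $\gamma_j$ is Lipschitz, $\Phi_j$ and $\Phi_j^{-1}$ are both bi-Lipschitz with bounded Jacobians a.e.\ (Rademacher), so the composition $g_j:=(\zeta_j f)\circ \Phi_j^{-1}$ lies in $H^{1,2}$ of (a subset of) the half-space with $\|g_j\|_{H^{1,2}}\Cle \|\zeta_j f\|_{H^{1,2}(\Omega)}$. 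I then extend $g_j$ by even reflection, $\tilde g_j(y',y_n):=g_j(y',-y_n)$ for $y_n>0$. Standard computations (using weak derivatives and the density of smooth functions) show that the reflected function belongs to $H^{1,2}$ of the full neighborhood, with norm bounded by twice that of $g_j$. Pulling back via $\Phi_j$, multiplying by a cutoff that equals $1$ on $\mathrm{supp}\,\zeta_j$ and vanishes outside $U_j$, and extending by zero gives an $H^{1,2}(\mathbb{R}^n)$ function $E_j(\zeta_j f)$ supported in $U_j$ with norm controlled by $\|\zeta_j f\|_{H^{1,2}(\Omega)}$.

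Finally I define
\[
 E f := \zeta_0 f + \sum_{j=1}^N E_j(\zeta_j f),
\]
interpreted as extended by zero outside the supports. Linearity is immediate from the construction, and summing the local bounds yields $\|Ef\|_{H^{1,2}(\mathbb{R}^n)} \Cle \|f\|_{H^{1,2}(\Omega)}$, with a constant depending only on $\Omega$ (through the covering, the partition of unity and the Lipschitz constants of the $\gamma_j$'s). The main technical obstacle is the justification that reflection and composition with bi-Lipschitz maps preserve $H^{1,2}$ and yield the expected norm estimates; this follows from Rademacher's theorem, the chain rule for Sobolev functions under bi-Lipschitz changes of variables, and the fact that even reflection across a hyperplane maps $H^{1,2}$ of a half-space boundedly into $H^{1,2}$ of the full space. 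Everything else is routine bookkeeping with the partition of unity.
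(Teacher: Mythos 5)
Your argument is correct: it is the classical Calder\'on--Stein localization-and-reflection proof of the $H^{1,2}$ extension theorem for bounded Lipschitz domains, and all the essential ingredients are in place --- the finite boundary cover by Lipschitz graph patches, the partition of unity, the bi-Lipschitz flattening (with Rademacher's theorem giving bounded Jacobians and the chain rule for Sobolev functions), the even reflection across the flattened boundary, and the cutoff guaranteeing that each local extension is supported in its patch so that extension by zero and summation give a globally defined, linear operator with $\|Ef\|_{H^{1,2}(\rz^n)} \Cle \|f\|_{H^{1,2}(\Omega)}$. The only points worth spelling out in a fully detailed write-up are (i) that $\zeta_j f$, transported to the half-space, may be extended by zero to the whole lower half-space because $\zeta_j$ has compact support in $U_j$, and (ii) the verification (by density of smooth functions and matching traces on the hyperplane) that even reflection produces no distributional surface term; you name both, so nothing is missing. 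The paper itself offers no proof of this lemma --- it is stated as a known consequence of the assumption that $\Omega$ is bounded with globally Lipschitz boundary --- so your contribution is simply to supply the standard argument behind that citation-level statement, which is entirely adequate.
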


\section{Proof of existence}\label{S:existence}

To prove the existence of a weak solution we proceed as follows. We first regularize the problem with a small parameter $\varepsilon > 0$, and prove existence of solution to this problem using Leray-Schauder's fixed point theorem. 
We will then prove estimates independent of $\varepsilon$ as it tends to zero that will lead to convergence 
of a subsequence to a solution to the original problem.

\subsection{Regularized problem}\label{S:RegularizedProblem}

For a fixed $0<\varepsilon \le 1/4$, we define the mapping 
\begin{align}\label{P:regularized}
G: \HOzero \times \HOzero \times \Vv \rightarrow \HOzero \times \HOzero \times \Vv, \quad G(\phiz,T, \uu)=(\phizt,\Tt, \uut)
\end{align}
as follows. Given $\phiz \in  \HOzero$ such that $0 \le \phi := \phi_0+\phi_D \le 1$, $T \in \HOzero$ and $\uu \in \Vv$, let $\phizt  \in  \HOzero$ be such that $\phit=\phizt +\phid  \in \HO$ satisfies, for every $\psi \in \HOzero$,
\begin{align} \label{phi_tilde}
\intO   \Eeu \cdot\nabla \phit \, \psi \dx  
+\intO  \nabla \phit \cdot \nabla \psi \dx 
+\intO \he(\phi)\nabla \mep*T \cdot \nabla \psi \dx=0.
\end{align}
Here, since $\uu \in \Vv \subset H_0^{1,2}(\Omega)^d$ and $T \in H_0^{1,2}(\Omega)$, the convolutions have been performed with their natural extensions as zero outside $\Omega$.
Furthermore,
\begin{align}\he(z)=\begin{cases}
0 & \text{ for } z<0,\\
-\frac{1}{\varepsilon^2}z^3+(\frac{2}{\varepsilon}-1)z^2 & \text{ for } 0\leq z<\varepsilon,\\
h(z) & \text{ for } \varepsilon\leq z\leq 1-\varepsilon,\\
\frac{1}{\varepsilon^2}z^3+(-\frac{3}{\varepsilon^2}+\frac{2}{\varepsilon}-1) z^2+(\frac{3}{\varepsilon^2}-\frac{4}{\varepsilon}+2)z -\frac{1}{\varepsilon^2}+\frac{2}{\varepsilon}-1 & \text{ for } 1-\varepsilon<z\leq 1,\\
0 & \text{ for } 1<z,
\end{cases}\label{hep_def}
\end{align}
is a smooth approximation of $h(z)=z^+(1-z)^+ $
that satisfies $|h(z)-\he(z)|\leq \varepsilon-\varepsilon^2$, 
$\supp h = \supp \he=[0,1]$, 
and $h=\he$ in $[\varepsilon, 1-\varepsilon]$. 
Moreover, $h'(z)=\chi_{(0,1)}(z) (1-2z)$ so that $|h'(z)|\leq 1$, and also $\|\he'\|_{L^\infty}\le 1$ for every $\varepsilon > 0$.

For $\phit$ fulfilling Eq. \eqref{phi_tilde} we define
\begin{align}\label{flux_tilde}
\jjp=-\nabla \phit -\he(\phi)\nabla \mep*T
\quad\text{and}\quad \tilde{\eta} = 1 + \tilde{\phi},
\end{align}
and let $\Tt \in \HOzero$ be such that, for every $\varphi \in \HOzero$,
\begin{align}\label{T_tilde}
\underbrace{\intO \mep*k(\phi) \nabla \Tt\cdot \nabla \varphi \dx 
     +\intO \Eeu \cdot \nabla (\etat \Tt)   \varphi \dx 
- \intO \Tt \jjp \cdot \nabla \varphi \dx}_{=:b_T[\tilde T, \varphi]}=\intO f \varphi \dx.
\end{align}
Finally, we let $\uut \in \Vv$ be such that, for every $\vv \in \Vv$,
\begin{equation}\label{u_tilde}
\begin{split}
\intO \frac{\mep*\muu}{2} 
D(\uut) : D(\vv) \dx  &+ \intO \Eeu \cdot \nabla (\rho\uut) \cdot \vv \dx \\
&- \intO \uut \otimes \jjp: \nabla \vv \dx 
+ \intO \beta T \ee_g \cdot \vv \dx=\intO \gb \cdot \vv \dx.
\end{split}
\end{equation}

\begin{lemma}\label{Lemma_phit}
For any $0<\varepsilon \leq 1/4$ there exists a unique solution $\phit$ to Eq.\ \eqref{phi_tilde} and it satisfies
\begin{align*}
\normL{\nabla \phit}&\le \normLO{\nabla \phizt}+\normLO{\nabla \phid} \Cle  
   \normLO{\nabla T} +\norm{b}_{H^{1/2,2}(\partial \Omega)}(1+ \normLOfour{\uu}).
\end{align*}
Moreover $\phit \in \HOtwo$ and 
\[
\normHOtwo{ \phit}\leq  C_\varepsilon \normLO{T} 
   +C\normLOfour{\uu}\norm{b}_{H^{1/2,2}(\partial \Omega)}
      +C\norm{b}_{H^{3/2,2}(\partial \Omega)}+C_\varepsilon
          \normLO{\uu}\normLO{\nabla \phizt}.
\]
\end{lemma}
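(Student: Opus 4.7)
The plan is to read \eqref{phi_tilde} as a linear convection--diffusion problem for the correction $\phizt := \phit - \phid \in \HOzero$, with $\phi=\phiz+\phid$, $T$ and $\uu$ playing the role of given data. Substituting $\phit=\phizt+\phid$, equation \eqref{phi_tilde} rearranges into
\[
\intO \nabla\phizt\cdot\nabla\psi + (\mep*\uu)\cdot\nabla\phizt\,\psi \dx
= -\intO \nabla\phid\cdot\nabla\psi + (\mep*\uu)\cdot\nabla\phid\,\psi + \he(\phi)\nabla(\mep*T)\cdot\nabla\psi \dx
\]
for every $\psi\in\HOzero$. Existence and uniqueness of $\phizt$ then follow from the Lax--Milgram theorem. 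Continuity of the bilinear form on the left is standard using the Sobolev embedding $\HOzero\hookrightarrow L^4(\Omega)$ (valid in $d\le 3$); for coercivity I would exploit the crucial fact that $\uu\in\Vv$ extended by zero outside $\Omega$ remains divergence-free on $\rz^d$, so $\mep*\uu$ is divergence-free as well. Hence
\[
\intO (\mep*\uu)\cdot\nabla\phizt\,\phizt\dx = \tfrac12\intO (\mep*\uu)\cdot\nabla(\phizt^2)\dx = 0
\]
after integration by parts, and the diagonal value of the bilinear form reduces to $\normLO{\nabla\phizt}^2$.

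For the gradient estimate I would test \eqref{phi_tilde} with $\psi=\phizt$. The convection of $\phizt$ against itself vanishes as above, leaving $\normLO{\nabla\phizt}^2$ on the diagonal. The remaining terms are controlled by H\"older and Sobolev:
\[
\Big|\intO(\mep*\uu)\cdot\nabla\phid\,\phizt\dx\Big| \le \normLOfour{\mep*\uu}\normLO{\nabla\phid}\normLOfour{\phizt} \Cle \normLOfour{\uu}\|b\|_{H^{1/2,2}(\partial\Omega)}\normLO{\nabla\phizt},
\]
using Lemma~\ref{Lemma_convolution_mollifier}(iii) together with the standard trace-lifting bound $\normLO{\nabla\phid}\Cle\|b\|_{H^{1/2,2}(\partial\Omega)}$. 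The Dirichlet cross-term is controlled by $\normLO{\nabla\phid}\normLO{\nabla\phizt}$, and the thermal term by $|\he|\le 1$ together with $\normLO{\nabla(\mep*T)}\le\normLO{\nabla T}$. Dividing through by $\normLO{\nabla\phizt}$ and using $\normLO{\nabla\phit}\le\normLO{\nabla\phizt}+\normLO{\nabla\phid}$ yields the first inequality of the lemma.

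For the $\HOtwo$ estimate, the key idea is to exploit the harmonicity of $\phid$ to reduce the equation to a Poisson problem. Integrating the $\nabla\psi$-pairings by parts (permitted because $\mep*T$ is smooth and $\he(\phi)\in\HO$) and using $\Delta\phid=0$, I obtain
\[
-\Delta\phizt = -(\mep*\uu)\cdot\nabla\phit + \he'(\phi)\nabla\phi\cdot\nabla(\mep*T) + \he(\phi)\Delta(\mep*T)\quad\text{in }\Omega.
\]
Lemma~\ref{Lemma:regularity} with $a\equiv 1$ and $p=2$ then gives $\normHOtwo{\phizt}\Cle\normLO{\mathrm{RHS}}$, and each term is estimated using the smoothing supplied by the mollifier. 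The thermal contributions produce the $C_\varepsilon\normLO{T}$-term via Lemma~\ref{Lemma_convolution_mollifier}(v): $\normLO{\Delta(\mep*T)}\le C_\varepsilon\normLO{T}$ and $\normLOinf{\nabla(\mep*T)}\le C_\varepsilon\normLO{T}$, combined with $|\he|,|\he'|\le 1$ and the uniform bound on $\normLO{\nabla\phi}$ from the gradient estimate. The convective term is split according to $\nabla\phit=\nabla\phizt+\nabla\phid$: the $\nabla\phizt$-piece is handled by $\normLOinf{\mep*\uu}\normLO{\nabla\phizt}\Cle C_\varepsilon\normLO{\uu}\normLO{\nabla\phizt}$, yielding the last term of the claim, while the $\nabla\phid$-piece is bounded by a H\"older/Sobolev pairing of the form $\normLOfour{\mep*\uu}\normLOfour{\nabla\phid}$ to produce the $C\normLOfour{\uu}\|b\|_{H^{1/2,2}(\partial\Omega)}$ contribution (here Lemma~\ref{Lemma_convolution_mollifier}(iii) keeps the constant $\varepsilon$-independent). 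Finally, the triangle inequality $\normHOtwo{\phit}\le\normHOtwo{\phizt}+\normHOtwo{\phid}$ combined with the elliptic-regularity bound $\normHOtwo{\phid}\Cle\|b\|_{H^{3/2,2}(\partial\Omega)}$ supplies the remaining term.

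The main technical obstacle is bookkeeping the $\varepsilon$-dependent constants cleanly in the $\HOtwo$ bound: the convective term $(\mep*\uu)\cdot\nabla\phit$ must be split so that only the contribution involving $\nabla\phizt$ absorbs the $C_\varepsilon$ coming from $\normLOinf{\mep*\uu}\le C_\varepsilon\normLO{\uu}$, whereas the $\phid$-part must be bounded with an $\varepsilon$-independent constant, since it has to support the passage to the limit $\varepsilon\to 0$ performed later. This forces different H\"older pairings for the two pieces and a careful use of the Young-type estimate that convolution with $\mep$ leaves $L^p$-norms unchanged with a universal constant, rather than the (cheaper but $\varepsilon$-dependent) smoothing bound of Lemma~\ref{Lemma_convolution_mollifier}(v).
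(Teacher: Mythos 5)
Your proposal follows essentially the same route as the paper: Lax--Milgram with coercivity coming from the anti-symmetry of the mollified convection term (since $\mep*\uu$ is divergence free and the test function vanishes on $\partial\Omega$), testing with $\phizt$ for the $H^1$ bound, and rewriting the equation as a Poisson problem for $\phizt$ (using $\Delta\phid=0$) to invoke elliptic regularity for the $H^{2,2}$ bound. The only discrepancies are bookkeeping ones --- the term $\he'(\phi)\nabla\phi\cdot\nabla(\mep*T)$ is controlled by the norm of the \emph{input} $\phi$, not by the gradient estimate for $\phit$ as you claim, and $\normLOfour{\nabla\phid}$ is controlled by $\norm{b}_{H^{3/2,2}(\partial \Omega)}$ rather than $\norm{b}_{H^{1/2,2}(\partial \Omega)}$ --- and these are at the same level of precision as the paper's own argument, which likewise only needs the $H^{2,2}$ bound for fixed $\varepsilon$ (uniformity in $\varepsilon$ is required only for the $H^1$-level estimate).
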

\begin{proof}
Define the continuous bilinear form $b_\phi:\HOzero\times\HOzero\to\rz$ by
$$
b_\phi[\varphi,\psi] = \intO   \Eeu \cdot\nabla \varphi \, \psi \dx  
	+ \intO  \nabla \varphi \cdot \nabla \psi \dx, \qquad
\varphi,\psi\in H_0^{1,2}(\Omega).
$$
The first integral is anti-symmetric because $\Eeu$ is divergence free, 
and $\psi$ vanishes on the boundary, so the bilinear form $b_\phi[\cdot,\cdot]$ is
coercive. 

Now, $\phizt$ solves Eq. \eqref{phi_tilde}, iff 
\[
\phizt \in \HOzero : \qquad b_\phi[\phizt,\psi] = b_\phi[\phi_D,\psi] 
      - \intO \he(\phi)\nabla \mep*T \cdot \nabla \psi \dx , 
        \quad \forall \psi \in \HOzero.
\]
Thus the existence of a unique solution $\phit$ of~\eqref{phi_tilde} follows 
from Lax-Milgram's Lemma. 

To show the $\Hone$-estimate, we test Eq.\ \eqref{phi_tilde} with $\psi=\phizt$, and use the fact
that $\phid$ is harmonic, to obtain
\begin{align*}
&\intO \Eeu \cdot  \nabla \phizt  \,\phizt \dx+\intO  \Eeu  \cdot \nabla \phid \phizt \dx  
+\intO | \nabla \phizt|^2 \dx+\intO \he(\phi)\nabla \mep*T \cdot \nabla \phizt \dx=0.
\end{align*} 
Making use of the anti-symmetry of the first integral again and employing the Cauchy-Schwarz inequality we arrive at
\begin{align*}
\normLO{\nabla \phizt}^2 \leq  \normLOinf{\he(\phi)}\normL{\mep * \nabla T} \normLO{\nabla\phizt} 
+\normLOfour{ \Eeu}\normLO{\nabla \phid}\normLOfour{\phizt}.
\end{align*}
Employing Poincar\'e's inequality and the Sobolev embedding for the last summand, 
noting that $\normLO{\nabla \phid}\leq \norm{b}_{H^{1/2,2}(\Omega)}$, 
Lemma~\ref{Lemma_convolution_mollifier} and the bound 
$\normLOinf{\he(\phi)}\leq\frac{1}{4}$ yield the first assertion. 

The last assertion follows from elliptic regularity as follows: 
Since $\nabla \mep*T\eqqcolon \boldsymbol{s}$ belongs to $\HO\cap\LOinf$ and $\Eeu \in \LlOinf$ and recalling that $\phid$ is harmonic we notice that 
Eq. \eqref{phi_tilde} implies that $\phizt$ is the weak solution to
\begin{align}\label{phi_proof}
-\Delta  \phizt = \nabla \cdot (\he(\phi)\nabla \mep*T) - \Eeu \cdot \nabla (\phizt+\phid) 
\quad\text{in $\Omega$}, \qquad
\phizt = 0\quad\text{ on $\partial\Omega$}, 
\end{align} 
with right-hand side satisfying
\begin{align*}
\nabla \cdot (\he(\phi)\nabla \mep*T) - \Eeu \cdot\nabla (\phizt+\phid)  
&= \nabla \he(\phi)\cdot\boldsymbol{s}+\he(\phi)\nabla \cdot\boldsymbol{s}- \Eeu\cdot \nabla (\phizt+\phid)\\
&=\underbrace{\he'(\phi)}_{\in \Linf}\underbrace{\nabla \phi}_{\in \Lltwo} \cdot\underbrace{ \boldsymbol{s}}_{\in \Llinf}+\underbrace{\he(\phi)}_{\in \Linf}\underbrace{\nabla \cdot\boldsymbol{s}}_{\in \Ltwo} -  \underbrace{\Eeu}_{\in \Llinf}\cdot \underbrace{\nabla (\phizt+\phid)}_{\in \Lltwo}.
\end{align*}
Thus the right hand side is in $\LOtwo$. So by elliptic regularity, 
$\phizt \in \HOtwozero$ and since $\phid \in \HOtwo$ one concludes 
that $\phit \in \HOtwo$ and also the estimate for  $\normHOtwo{ \phit}$ holds.
\end{proof}

\bigskip
For the remainder it is important to know that $\phit$ is bounded pointwise
from below and above. 
\begin{lemma}[Pointwise bounds on $\phie$]\label{L:pointwise}
The solution $\phit$ of Eq.\ \eqref{phi_tilde} fulfills $0\leq \phit\leq 1$
a.e.
\end{lemma}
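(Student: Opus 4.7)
The plan is a classical Stampacchia truncation argument. To show $\phit\ge 0$ a.e.\ I would test~\eqref{phi_tilde} with $\psi=(\phit)^-\in \HOzero$, and to show $\phit\le 1$ a.e.\ with $\psi=(\phit-1)^+\in \HOzero$. Both test functions have vanishing trace because $\phit|_{\partial\Omega}=b\in[0,1]$.

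With $\psi=(\phit)^-$, the convective term vanishes via the identity
\[
\intO (\mep*\uu)\cdot\nabla\phit\,(\phit)^-\,\dx
= -\tfrac12\intO (\mep*\uu)\cdot\nabla\bigl(((\phit)^-)^2\bigr)\,\dx
\]
followed by integration by parts: the zero extension of $\uu\in\Vv$ to $\rz^d$ is divergence-free (because $\uu$ has vanishing trace on $\partial\Omega$), hence so is its mollification $\mep*\uu$, while $((\phit)^-)^2\in\HOzero$. The diffusion term contributes $-\|\nabla(\phit)^-\|_{L^2(\Omega)}^2$. The cross term $\intO \he(\phi)\nabla(\mep*T)\cdot\nabla(\phit)^-\,\dx$ is concentrated on $\{\phit<0\}$; there I would appeal to the structural property that $\he$ vanishes outside $[0,1]$, so that the natural evaluation $\he(\phit)$ is zero on this set, and combine with the assumption $0\le\phi\le 1$ on the input to argue that the integrand vanishes. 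Once that is done, the equation reduces to $\|\nabla(\phit)^-\|_{L^2(\Omega)}^2=0$, and since $(\phit)^-$ has zero trace, Poincar\'e's inequality yields $(\phit)^-\equiv 0$, i.e., $\phit\ge 0$. The upper bound follows symmetrically using $\psi=(\phit-1)^+$ and the fact that $\he(z)=0$ for $z\ge 1$.

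The main obstacle is precisely that cross term: in~\eqref{phi_tilde} $\he$ is evaluated at the \emph{input} $\phi$ rather than at the unknown $\phit$, so its vanishing on $\{\phit<0\}\cup\{\phit>1\}$ is not automatic from the design of $\he$ alone. Closing this gap plausibly requires invoking the fixed-point identity $\phi=\phit$ (so that $\he(\phi)=\he(\phit)$ really does vanish on those level sets), or a comparison/absorption argument that exploits the specific regularization structure of $\he$ together with the input bound $0\le\phi\le 1$ and the harmonicity of $\phid$. The remainder is routine Stampacchia truncation combined with the divergence-free calculation above.
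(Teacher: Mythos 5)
Your truncation argument is exactly the paper's proof in structure: the paper tests \eqref{phi_tilde} with $\min\{\phit_0+\phid,0\}$ (your $(\phit)^-$ up to sign), kills the convective term by anti-symmetry of the divergence-free mollified velocity, discards the thermophoretic term, and concludes with Poincar\'e; the upper bound uses the analogous truncation at $1$. So you have reproduced the intended argument, and the one step you could not close is precisely the step the paper glosses over.

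Your flagged obstacle is genuine, and it sits in the paper rather than in your attempt: in the displayed computation the paper writes $\int_{\{\phit<0\}}\he(\phit)\,\nabla\mep*\Te\cdot\nabla\phit\,\dx$ and declares it zero ``by definition of $\he$'', although \eqref{phi_tilde} carries the coefficient $\he(\phi)$ evaluated at the \emph{input} $\phi$ of the map $G$, not at the unknown $\phit$. For the linearized auxiliary problem the bound can in fact fail: take $\uu=\boldsymbol{0}$, $b\equiv 1/2$, $\phi\equiv 1/2$ (so $\he(\phi)\equiv 1/4$) and $T\in\HOzero$ large, nonnegative and supported away from $\partial\Omega$; then $\phit+\tfrac14\,\mep*T$ is harmonic with boundary values $1/2$, hence $\phit=\tfrac12-\tfrac14\,\mep*T$ drops below $0$ wherever $\mep*T$ is large. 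So no absorption or comparison trick can rescue the statement for arbitrary admissible inputs; the cancellation on $\{\phit<0\}\cup\{\phit>1\}$ needs $\he$ evaluated at the same function whose level sets are being truncated. That is exactly your proposed fix: the argument is legitimate at a fixed point, where $\phi=\phit=\phie$ and the coefficient in \eqref{phi_epsilon} is $\he(\phie)$, and indeed that is the only place the bound is used (the lemma's very title refers to $\phie$, and the existence theorem invokes it ``in the same way''). With the fixed-point identification made explicit, your proof is the correct and complete version of the paper's argument; as a statement about the auxiliary $\phit$ for general input $\phi$, the lemma needs to be rephrased.
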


\begin{proof}
The bounds on $\phit$ can be shown as in \citep{Baensch2018}. As a test function 
we choose $\varphi=\min{\{\phit_0+\phid, 0\}}$ which is in $\HOzero$ since 
$\phit_0+\phid \ge 0$ at the boundary, by construction of $\phit_0$ and $\phid$. 
So we have
\begin{align*}
\underbrace{\int _{\phit<0} \mep*\uu \cdot \nabla \phit  \phit \dx}_{=0,\text{ (anti-symmetric)}} + \int _{\phit<0} |\nabla\phit |^2 \dx 
\quad+\underbrace{\int _{\phit<0} \he(\phit) \nabla \mep*\Te \cdot \nabla \phit \dx}_{=0 \text{ (Def.\ of $\he$)}} =0.
\end{align*}
Consequently using Poincar\'e's inequality we get  
\begin{align*}
0=\int _{\phit<0} |\nabla\phit |^2 \dx =\norm{\nabla \phit}_{L^2(\phit<0)}^2\geq 
C\norm{ \phit}_{L^2(\phit<0)}^2,
\end{align*} 
which shows that $\varphi=\min{\{\phit, 0\}}\equiv0$, thus $\phit \geq 0$ a.e. The upper bound also follows by taking $\varphi=-\min{\{1-(\phit_0+\phid), 0\}}$.
\end{proof}

We notice that  Eq. \eqref{phi_tilde} implies
$\intO  ( \phit\, \Eeu+\jjp) \cdot \nabla \psi \dx=0 \, \text{ for all } \, \psi \in \HOzero$
with the flux $\jjp$ given by Eq. \eqref{flux_tilde}.
Furthermore, since  
$\nabla (\psi^2)=2\psi\nabla\psi\in L^{3/2}(\Omega)$ and $\phit \in \HOtwo$ 
we have $\jjp\in \HhO \hookrightarrow \boldsymbol{L}^6(\Omega)$.  Thus by density
one can take also $\psi^2$ as a test function:

\begin{align}
\intO  ( \phit\, \Eeu+\jjp)  \cdot \nabla (\psi^2)\, \mathrm{d}x=0, \quad \forall \, \psi \in \HOzero 
\label{j_phisquared}
\end{align}

\bigskip
Next we show existence of a unique solution to Eq. \eqref{T_tilde}.
\begin{lemma}{} \label{Lemma_Tt}
For any $0<\varepsilon \leq 1/4$ there exists a unique solution $\Tt$ to Eq.\ \eqref{T_tilde} 
and it satisfies
$\normLO{\nabla \Tt}\leq C \normLO{f}$.
Moreover $\Tt \in \HOtwozero$ and 
$\normHOtwo{ \Tt}\leq C_\varepsilon (\normHO{\phit},\normLO{\uu},\normLO{T}, \normLO{f})$.
\end{lemma}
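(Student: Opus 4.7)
The plan is to apply the Lax--Milgram theorem to $b_T[\cdot,\cdot]$ on $\HOzero\times\HOzero$ and then lift $\Tt$ to $\HOtwo$ via elliptic regularity. A convenient reformulation uses $\Div\Eeu=0$ and $\varphi|_{\partial\Omega}=0$ to integrate the convective term by parts: $\intO \Eeu\cdot\nabla(\etat\Tt)\varphi\dx = -\intO \etat\Tt\,\Eeu\cdot\nabla\varphi\dx$, so that Eq.~\eqref{T_tilde} reads
$\intO \mep*k(\phi)\,\nabla\Tt\cdot\nabla\varphi\dx - \intO \Tt(\etat\Eeu+\jjp)\cdot\nabla\varphi\dx = \intO f\varphi\dx.$
Continuity of $b_T$ on $\HOzero\times\HOzero$ is then immediate from $0\leq\etat\leq 2$ (Lemma~\ref{L:pointwise}), $\|\Eeu\|_{\Llinf}\Cle C_\varepsilon\|\uu\|_{\Ltwo}$ (Lemma~\ref{Lemma_convolution_mollifier}), the $L^\infty$-bound on $\mep*k(\phi)$, and $\jjp\in\LlOsix$, which holds because $\phit\in\HOtwo\hookrightarrow W^{1,6}(\Omega)$ by Lemma~\ref{Lemma_phit} and $\he(\phi)\nabla(\mep*T)\in\Llinf$.

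For coercivity I would test with $\varphi=\Tt$. The diffusion term gives $\intO\mep*k(\phi)|\nabla\Tt|^2\dx\geq k_0\normLO{\nabla\Tt}^2$, the uniform lower bound $k_0$ on $k(\phi)$ transferring to $\mep*k(\phi)$. The remaining contribution equals $-\tfrac12\intO(\etat\Eeu+\jjp)\cdot\nabla(\Tt^2)\dx$, and I claim it vanishes. Indeed, $\Div\Eeu=0$ and $\nabla\etat=\nabla\phit$ yield $-\tfrac12\intO\etat\Eeu\cdot\nabla(\Tt^2)\dx = \tfrac12\intO\Eeu\cdot\nabla\phit\,\Tt^2\dx$, whereas Eq.~\eqref{j_phisquared} with $\psi=\Tt$, combined once more with $\Div\Eeu=0$, gives $\intO\jjp\cdot\nabla(\Tt^2)\dx = -\intO\phit\,\Eeu\cdot\nabla(\Tt^2)\dx = \intO\Eeu\cdot\nabla\phit\,\Tt^2\dx$. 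The two pieces cancel, leaving $b_T[\Tt,\Tt]\geq k_0\normLO{\nabla\Tt}^2$. Lax--Milgram together with Poincar\'e's inequality then produce existence, uniqueness, and the $\varepsilon$-independent bound $\normLO{\nabla\Tt}\Cle\normLO{f}$.

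For the $\HOtwo$-estimate, the preliminary observation is that $\Div(\etat\Eeu+\jjp)=0$: rewriting Eq.~\eqref{phi_tilde} in strong form gives $\Eeu\cdot\nabla\phit+\Div\jjp=0$, which combined with $\Div\Eeu=0$ and $\nabla\etat=\nabla\phit$ yields the claim. Consequently the reformulated equation has the strong form $-\Div(\mep*k(\phi)\nabla\Tt)=f-\nabla\Tt\cdot(\etat\Eeu+\jjp)$. With $\nabla\Tt\in\Lltwo$, $\etat\Eeu\in\Llinf$ and $\jjp\in\LlOsix$, the right-hand side lies only in $L^{3/2}(\Omega)$, so Lemma~\ref{Lemma:regularity} applied with $p=3/2$ yields $\Tt\in H^{2,3/2}(\Omega)$. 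The Sobolev embedding $H^{2,3/2}(\Omega)\hookrightarrow W^{1,3}(\Omega)$, valid for $d\leq 3$, upgrades $\nabla\Tt$ from $L^2$ to $L^3$; a second pass then places the right-hand side in $\LOtwo$, and Lemma~\ref{Lemma:regularity} applied again with $p=2$ delivers $\Tt\in\HOtwozero$ together with the claimed bound, the dependence on $\normHO{\phit}$ entering through $\normHOtwo{\phit}$ as controlled by Lemma~\ref{Lemma_phit}.

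The main obstacle is precisely this bootstrap: the product $\nabla\Tt\cdot\jjp$ is a priori only $L^{3/2}$, so standard $L^2$-elliptic regularity cannot be invoked directly, and the argument relies crucially on Lemma~\ref{Lemma:regularity} admitting exponents $p<2$. A secondary, but essential, technical point is that coercivity hinges on combining the anti-symmetry coming from $\Div\Eeu=0$ with the specific identity~\eqref{j_phisquared} satisfied by $\jjp$; without this cancellation one would obtain only coercivity modulo a lower-order remainder that could not be absorbed uniformly in $\varepsilon$.
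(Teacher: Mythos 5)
Your proposal is correct, and its first half (existence, uniqueness, the $\varepsilon$-uniform bound) is essentially identical to the paper's argument: boundedness and coercivity of $b_T$ via Lax--Milgram, with the convective and flux contributions cancelling exactly through the anti-symmetry coming from $\Div\Eeu=0$ together with the identity \eqref{j_phisquared} tested with $\psi=\Tt$. Where you genuinely diverge is the $H^{2,2}$ bootstrap. You first observe that \eqref{phi_tilde} in strong form gives $\Eeu\cdot\nabla\phit+\Div\jjp=0$, hence $\Div(\etat\Eeu+\jjp)=0$, so the strong form of \eqref{T_tilde} reads $-\Div(\mep*k(\phi)\nabla\Tt)=f-\nabla\Tt\cdot(\etat\Eeu+\jjp)$ with no zeroth-order terms in $\Tt$ on the right. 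Consequently you need only two passes through Lemma~\ref{Lemma:regularity}: $L^{3/2}\Rightarrow H^{2,3/2}\Rightarrow\nabla\Tt\in L^3\Rightarrow L^2\Rightarrow H^{2,2}$. The paper instead expands $\tilde f=f-\nabla\Tt\cdot\jjp-\Tt\,\Div\jjp-\nabla\etat\,\Tt\cdot\Eeu-\etat\nabla\Tt\cdot\Eeu$ without exploiting the divergence-free structure, and must therefore also upgrade the integrability of $\Tt$ itself (choosing $\Tt\in L^7$ for $d=3$, obtaining $\tilde f\in L^{14/9}$, $\Tt\in H^{2,14/9}\hookrightarrow L^\infty$) before the final $L^2$ step -- three passes in total. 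Your variant buys a shorter bootstrap and avoids the intermediate $L^\infty$-bound on $\Tt$; the paper's version is more mechanical and does not require isolating the divergence identity, but both rest on the same two pillars: the cancellation \eqref{j_phisquared} for coercivity and Lemma~\ref{Lemma:regularity} with exponents $p<2$ for the regularity lift. Your closing remarks about where the difficulty lies are accurate.
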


\begin{proof}
The existence and uniqueness of $\Tt$ follows again from Lax-Milgram theorem. 
Indeed, it is easy to see that the bilinear form $b_T[\cdot,\cdot]$ is bounded in $H_0^1(\Omega)\times H_0^1(\Omega)$. Next, for $\varphi \in H_0^1(\Omega)$, we have
\[
b_T[ \varphi, \varphi ]= \intO \mep*k(\phi) \nabla \varphi\cdot \nabla \varphi \dx 
   +\intO \Eeu \cdot \nabla (\etat \varphi)   \varphi \dx 
- \intO \varphi \jjp \cdot \nabla \varphi \dx.
\]
By partial integration on the second integral and the fact that
\begin{align*}
\intO  \etat \varphi\,  \Eeu \cdot \nabla \varphi \dx =\intO   (1+\phit) \varphi\, \Eeu\cdot \nabla   \varphi \dx 
=\underbrace{\intO \varphi \,\Eeu \cdot \nabla   \varphi \dx}_{=0 \text{ (anti-symmetric)}} +\intO  \phit\, \varphi \,\Eeu \cdot \nabla   \Tt \dx,
\end{align*} 
we get
\[
b_T[ \varphi, \varphi ] = \intO \mep*k(\phi) |\nabla \varphi|^2\dx - \underbrace{\frac{1}{2}\intO ( \phit\, \Eeu +\jjp) \cdot \nabla (\varphi^2)\dx}_{=0 \text{ (Eq.\ \eqref{j_phisquared})}}= \intO \keffmep |\nabla \varphi|^2\dx,
\]
which shows the coercivity of the bilinear form. 
Since $f \in L^2(\Omega)$, there exists a unique solution $\Tt$ to Eq.\ \eqref{T_tilde}, and moreover
\begin{align*}
\normHO{\Tt}^2 \Cle b_T[\Tt,\Tt] \Cle \intO f \Tt \dx \le \normHO{\Tt} \normLO{f},
\end{align*}
which yields the first asserted estimate.

To show that $\Tt \in \HOtwozero$ we note that $\Tt$ is a weak solution to
\begin{align}\label{reg_T}
-\Div(\keffmep\nabla \Tt)= \underbrace{f-\Div\big((\Tt\jjp)- \etat \Tt\,\Eeu\big)}_{=\tilde{f}}
\quad\text{in }\Omega,\qquad
\Tt = 0 \quad\text{on }\partial\Omega. 
\end{align}
Let us now observe that
\begin{align*}
\tilde{f}=\underbrace{f}_{\in \Ltwo}-\underbrace{\underbrace{\nabla\Tt}_{\in \Lltwo}\cdot \underbrace{\jjp}_{\in \Llsix}}_{\in L^{3/2}} -\underbrace{\underbrace{\Tt}_{\in \Lsix}\underbrace{\nabla\cdot\jjp}_{\in \Ltwo}}_{\in L^{3/2}}- \underbrace{\underbrace{\nabla \etat}_{\in \Lltwo} \underbrace{\Tt}_{\in \Lsix}\cdot\underbrace{\Eeu}_{\in \Llinf} }_{\in L^{3/2}} -\underbrace{ \underbrace{\etat}_{\in \Lsix} \underbrace{\nabla  \Tt}_{\in \Lltwo}\cdot\underbrace{\Eeu}_{\in \Llinf}}_{\in L^{3/2}}.
\end{align*}
Therefore, $\tilde{f}\in L^{3/2}(\Omega)$ and, consequently, 
$\Tt\in H^{2,3/2}(\Omega)$ 
by Lemma \ref{Lemma:regularity} and then
\bigskip

$\nabla \Tt \in\begin{cases}
L^6(\Omega) & \text{ for } d=2,\\
L^3(\Omega) & \text{ for } d=3,
\end{cases}$  \qquad and \hfil
$\Tt \in\begin{cases}
L^\infty(\Omega)  & \text{ for } d=2,\\
L^p(\Omega) \text{ with } 1\leq p<\infty & \text{ for } d=3.
\end{cases}$ 
\bigskip

Choosing $p=7$, we get
\begin{align*}
\tilde{f}=\underbrace{f}_{\in \Ltwo}-\underbrace{\underbrace{\nabla\Tt}_{\in L^3}\cdot \underbrace{\jjp}_{\in \Lsix}}_{\in \Ltwo} -\underbrace{\underbrace{\Tt}_{\in L^7}\cdot\underbrace{\nabla\jjp}_{\in \Ltwo}}_{\in L^{14/9}}-\underbrace{ \underbrace{\nabla \etat}_{\in \Lltwo} \underbrace{\Tt}_{\in L^7}\cdot\underbrace{\Eeu}_{\in \Llinf} }_{\in L^{14/9}} -\underbrace{\underbrace{\etat}_{\in \Lsix} \underbrace{\nabla  \Tt}_{\in \boldsymbol{L}^3}\cdot\underbrace{\Eeu}_{\in \Llinf} }_{\in \Ltwo},
\end{align*}
which in turn yields $\tilde{f}\in L^{14/9}(\Omega)$ and, consequently, 
$\Tt\in H^{2,14/9}(\Omega)$ due to Lemma \ref{Lemma:regularity}.
Then one concludes also for $d=3$ that $\Tt \in \Linf(\Omega)$, 
whence $\tilde{f}\in \LOtwo$ and thus $\Tt \in \HOtwozero$, using elliptic regularity
again. The $\Htwo$-estimate also follows directly as part of the
elliptic regularity and the uniqueness of the solution $\Tt$.
\end{proof}

It stands out that Eq.~\eqref{u_tilde} is a vector-valued equation but it has the 
same structure as \eqref{T_tilde}.

\begin{lemma}{} \label{Lemma_ut}
There is a unique solution $\uut$ to Eq.\ \eqref{u_tilde} and 
$\normLO{\nabla \uut}\leq C \big(\normLO{\gb}+\normLO{T}\big)$.
Moreover $\uut \in \Vv \cap \boldsymbol{H}^{2,2}(\Omega)$ and 
$\normHOtwo{ \uut} \leq C_\varepsilon (\normHO{\phit},\normLO{\uu},\normLO{T}, \normLO{\gb})$.
\end{lemma}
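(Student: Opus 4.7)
The plan is to mimic the proof of Lemma~\ref{Lemma_Tt}, exploiting the author's remark that \eqref{u_tilde} has exactly the same structure as \eqref{T_tilde}: the scalar cancellation between the convective term $\int \Eeu\cdot\nabla(\etat \Tt)\varphi$ and the flux term $-\int \Tt\jjp\cdot\nabla\varphi$ that gave coercivity in Lemma~\ref{Lemma_Tt} has a direct vector-valued analog here.

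\textbf{Step 1: Existence/uniqueness via Lax--Milgram on $\Vv$.}  Define the bilinear form
\[
b_u[\uut,\vv] := \intO \tfrac{\mep*\muu}{2} D(\uut):D(\vv)\dx
 + \intO \Eeu\cdot\nabla(\rho\uut)\cdot\vv\dx
 - \intO \uut\otimes\jjp : \nabla\vv\dx
\]
on $\Vv\times\Vv$, and observe that $\vv\mapsto \intO(\gb-\beta T \ee_g)\cdot\vv\dx$ is a bounded functional on $\Vv$. Boundedness of $b_u$ follows from H\"older's inequality and the Sobolev embedding $H^{1,2}(\Omega)\hookrightarrow L^6(\Omega)$ (valid for $d\le 3$), using $\mep*\muu\in\Linf$, $\Eeu\in\Llinf$, $\rho=1+\phit\in[1,2]$ (Lemma~\ref{L:pointwise}), $\nabla\rho=\nabla\phit\in\Lltwo$, and $\jjp\in\LlOsix$ (the observation preceding Eq.~\eqref{j_phisquared}).

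\textbf{Step 2: Coercivity and the $H^1$-bound.} Testing with $\vv=\uut$, I will show that the convection and flux terms cancel exactly as in Lemma~\ref{Lemma_Tt}. Writing $\rho = 1+\phit$ and integrating by parts while using $\nabla\cdot\Eeu=0$ and $\uut=\mathbf{0}$ on $\partial\Omega$, a routine computation (component-by-component, using $\uut_i\partial_j\uut_i = \tfrac12\partial_j(|\uut|^2)$) gives
\[
\intO \Eeu\cdot\nabla(\rho\uut)\cdot\uut\dx - \intO \uut\otimes\jjp:\nabla\uut\dx
= -\tfrac12\intO (\phit\,\Eeu + \jjp)\cdot\nabla(|\uut|^2)\dx.
\]
Applying \eqref{j_phisquared} with $\psi=\uut_i\in\HOzero$ and summing over $i=1,\ldots,d$ makes the right-hand side vanish. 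Since $\uut\in\Vv$ satisfies $\Div\uut=0$, a short computation gives $\intO |D(\uut)|^2\dx = 2\intO|\nabla\uut|^2\dx$ (so Korn is trivial here), and the viscous term yields $b_u[\uut,\uut]\ge \mu_0\normLO{\nabla\uut}^2$. Combining with the right-hand side and Poincar\'e's inequality gives $\normLO{\nabla\uut}\Cle \normLO{\gb}+\normLO{T}$.

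\textbf{Step 3: $H^2$-regularity by bootstrap.} The weak equation \eqref{u_tilde} is the variational formulation of a stationary Stokes system with Lipschitz variable viscosity,
\[
-\Div\bigl(\mep*\muu\, D(\uut)\bigr) + \nabla\tilde p = \tilde h, \quad \Div\uut=0 \ \text{in }\Omega,\quad \uut=\mathbf{0}\ \text{on }\partial\Omega,
\]
with $\tilde h := \gb-\beta T\ee_g - \Eeu\cdot\nabla(\rho\uut)+\Div(\uut\otimes\jjp)$. I will bootstrap exactly as in Lemma~\ref{Lemma_Tt}: the same term-by-term H\"older estimates (with $\uut,\nabla\uut,\etat$ replaced by $\uut,\nabla\uut,\rho$) first show $\tilde h\in \boldsymbol{L}^{3/2}(\Omega)$, hence $\uut\in \boldsymbol{H}^{2,3/2}(\Omega)$ and, by Sobolev embedding, $\nabla\uut\in\Llsix$ resp.\ $\Llthree$ and $\uut\in \Llinf$ resp.\ $\boldsymbol{L}^p$ ($p<\infty$) for $d=2,3$; then, taking $p=7$ as in Lemma~\ref{Lemma_Tt}, $\tilde h\in\boldsymbol{L}^{14/9}$, giving $\uut\in\Llinf$ also for $d=3$; a final round yields $\tilde h\in\Lltwo$ and hence $\uut\in\boldsymbol{H}^{2,2}(\Omega)$, together with the asserted quantitative bound that depends on $\varepsilon$ only through $\|\nabla(\mep*\muu)\|_{\Linf}\le C_\varepsilon$.

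\textbf{The main obstacle} is that Lemma~\ref{Lemma:regularity} is stated for a scalar divergence-form equation, whereas the last step requires the analogous $L^p$-regularity for the Stokes system with Lipschitz variable viscosity. I will handle this by expanding $\Div(\mep*\muu\,D(\uut)) = \mep*\muu\,\Delta\uut + \nabla(\mep*\muu)\cdot D(\uut)$ for solenoidal $\uut$, so that after dividing by $\mep*\muu\ge \mu_0>0$ the system reduces to a constant-coefficient Stokes problem for $(\uut,\tilde p/(\mep*\muu))$ whose right-hand side contains only the data $\tilde h$ and lower-order terms in $\nabla\uut$ that are already controlled by the previous bootstrap iterate; the classical Cattabriga/Agmon--Douglis--Nirenberg regularity for Stokes in $W^{2,p}$ then applies and closes the estimate.
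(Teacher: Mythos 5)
Your Steps 1 and 2 coincide with the paper's argument: the paper settles existence, uniqueness and the $H^1$ bound by saying one proceeds ``exactly as in the first part of the proof of Lemma~\ref{Lemma_Tt}'', and your vector-valued transcription is exactly what is meant — the identity
$\intO \Eeu\cdot\nabla(\rho\uut)\cdot\uut\dx-\intO\uut\otimes\jjp:\nabla\uut\dx=-\tfrac12\intO(\phit\,\Eeu+\jjp)\cdot\nabla(|\uut|^2)\dx$,
annihilated by Eq.~\eqref{j_phisquared} with $\psi=\uut_i$ summed over $i$, plus the Korn-type identity $\intO|D(\vv)|^2\dx=2\intO|\nabla\vv|^2\dx$ on $\Vv$, is the intended coercivity proof. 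For the $\Hhtwo$ part you genuinely deviate: the paper simply cites Thm.~5.2.3 of \citep{Abels2008} (stationary Stokes with \emph{variable} viscosity, $\boldsymbol{H}^{2,2}$ solution with pressure in $\HO$), moves the nonlinear terms to the right-hand side and repeats the bootstrap of Lemma~\ref{Lemma_Tt}; you instead reduce to the constant-viscosity Stokes operator and invoke Cattabriga/ADN. Your route is more self-contained (it avoids the variable-coefficient Stokes regularity theorem), but as written it has a concrete gap.

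The gap is in the reduction itself. First, Eq.~\eqref{u_tilde} is posed only on divergence-free test functions, so there is no pressure yet: before writing the strong Stokes system you must recover $\tilde p$ by de Rham/Ne\v{c}as (the functional $\vv\mapsto\intO(\gb-\beta T\ee_g)\cdot\vv\dx-b_u[\uut,\vv]$ is bounded on $\HhOzero$ and vanishes on $\Vv$, yielding $\tilde p\in\LOtwo$ with $\normLO{\tilde p}$ controlled by the data). Second, after dividing by $a:=\mep*\muu$ the pressure term does \emph{not} become a pure gradient: since $a^{-1}\nabla\tilde p=\nabla(\tilde p/a)+\tilde p\,a^{-2}\nabla a$, the reduced constant-coefficient Stokes system carries the extra zeroth-order term $\tilde p\,a^{-2}\nabla a$ on its right-hand side, which your claim ``only the data $\tilde h$ and lower-order terms in $\nabla\uut$'' overlooks. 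The omission is repairable — $\nabla a\in\Llinf$ with an $\varepsilon$-dependent norm and $\tilde p\in\LOtwo$, so this term lies in $\LOtwo\subset L^{3/2}(\Omega)$ at every stage, and each Stokes step also upgrades the pressure along with $\uut$ — but without recovering and carrying the pressure your bootstrap does not close as stated. (A further caveat, shared with the paper's appeal to \citep{Abels2008} and to Lemma~\ref{Lemma:regularity}-type results, is that $W^{2,p}$ Stokes regularity needs more boundary smoothness than ``Lipschitz with exterior sphere condition''; I do not count that against you.)
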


\begin{proof}
To see the existence of a unique solution and the estimate, one can proceed exactly as in the first part of the proof of Lemma~\ref{Lemma_Tt}.

To show that $\uut \in \Vv \cap \boldsymbol{H}^{2,2}(\Omega)$, we refer to Thm.~5.2.3 
from  \citep{Abels2008}. There, the stationary Stokes equation with variable
viscosity was considered, 
and it was shown that there exists a unique solution in $\boldsymbol{H}^{2,2}(\Omega)$ with $p \in \HO$ fulfilling the estimate as stated in Lemma~\ref{Lemma_ut}. 
In our case, moving the nonlinear term to the right-hand side and using the same boot-strapping argument as above in the proof of Lemma~\ref{Lemma_Tt} for $\Tt$, the stated regularity $\uut \in \Vv \cap \boldsymbol{H}^{2,2}(\Omega)$ and the $\Htwo$-estimate follow.
\end{proof}

\bigskip
The next step is to show that there is a solution to Eq. \eqref{P:regularized}. To
this end the Leray-Schauder fixed point theorem is used.

\begin{theorem}{Leray-Schauder (\citep[Thm.\ 11.3]{Gilbarg2001})}
\label{Schauder}
Let $G$ be a compact mapping of a Banach space $\mathfrak{B}$ into itself, and suppose there exists a constant $M$ such that 
\begin{align}
\|x\|_\mathfrak{B}<M 
\end{align}
for all $ x\in \mathfrak{B} $ and $\sigma \in [0,1]$ satisfying $x=\sigma Gx$. Then $G$ has a fixed point.\end{theorem}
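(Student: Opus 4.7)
My plan is to deduce the Leray--Schauder fixed-point theorem from the simpler Schauder fixed-point theorem together with a radial retraction onto the closed ball $\overline{B_M(0)}$ of radius $M$. The hypothesis gives \emph{a priori} bounds for solutions of $x=\sigma G x$, and the retraction trick will convert those bounds into a contradiction on the sphere $\|x\|_\mathfrak{B}=M$.

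First, I would introduce the continuous retraction $r:\mathfrak{B}\to \overline{B_M(0)}$ defined by $r(x)=x$ when $\|x\|_\mathfrak{B}\le M$ and $r(x)=Mx/\|x\|_\mathfrak{B}$ otherwise. Since $r$ is continuous and $G$ is compact, the composition $\tilde G := r\circ G$ maps $\overline{B_M(0)}$ continuously into itself and carries bounded sets to relatively compact sets. Schauder's fixed-point theorem on the closed, bounded, convex set $\overline{B_M(0)}$ then yields some $x_0\in \overline{B_M(0)}$ with $\tilde G(x_0)=x_0$.

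Second, I would check that $x_0$ is actually a fixed point of $G$ itself. If $\|G(x_0)\|_\mathfrak{B}\le M$ then $r$ acts as the identity on $G(x_0)$ and we immediately obtain $x_0=G(x_0)$. Otherwise $r(G(x_0))=MG(x_0)/\|G(x_0)\|_\mathfrak{B}$, so that $x_0=\sigma G(x_0)$ with $\sigma=M/\|G(x_0)\|_\mathfrak{B}\in(0,1)$ and $\|x_0\|_\mathfrak{B}=M$. The standing hypothesis of the theorem forbids any such point on the sphere, giving a contradiction and completing the argument.

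The main obstacle is hidden inside Schauder's theorem, which in turn rests on Brouwer's theorem together with a finite-rank approximation of compact operators; modulo that classical machinery, the retraction argument above is elementary. An alternative route would go through Leray--Schauder degree theory directly: the straight-line homotopy $H(\sigma,x)=x-\sigma G(x)$ is admissible on $B_M(0)$ thanks to the a priori bound, so by homotopy invariance $\deg(I-G,B_M(0),0)=\deg(I,B_M(0),0)=1\ne 0$, from which a fixed point exists. Given that the statement is explicitly quoted from Gilbarg--Trudinger, however, I would expect the author to skip an internal proof altogether and apply the theorem to the operator $G$ defined in \eqref{P:regularized}, relying on Lemmas \ref{Lemma_phit}--\ref{Lemma_ut} for the compactness and the uniform bound.
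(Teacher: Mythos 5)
Your retraction-plus-Schauder argument is correct and is in fact the standard proof of this result (essentially the argument Gilbarg--Trudinger give for their Theorem 11.3): the fixed point $x_0$ of $r\circ G$ on $\overline{B_M(0)}$ either satisfies $\|G(x_0)\|_{\mathfrak{B}}\le M$, whence $x_0=G(x_0)$, or else $x_0=\sigma G(x_0)$ with $\sigma\in(0,1)$ and $\|x_0\|_{\mathfrak{B}}=M$, contradicting the strict a priori bound. The paper itself gives no proof of this theorem --- as you anticipated, it is quoted as a classical result and then applied to the operator $G$ of \eqref{P:regularized} via Lemmas \ref{Lemma_phit}--\ref{Lemma_ut} and Lemma \ref{L:compact} --- so your sketch is sound and there is nothing further to compare.
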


The rest of this section is devoted to proving that the assumptions of 
the Leray-Schauder theorem hold for the mapping $G$ defined in Eq. \eqref{P:regularized}.
\begin{lemma}\label{L:compact}
Let $0<\varepsilon \leq 1/4$.
The mapping $G : \HOzero \times \HOzero \times \Vv \rightarrow \HOzero \times \HOzero \times \Vv$ defined in Eq. \eqref{P:regularized} is well defined and compact.
\label{G_continuous_bounded}
\end{lemma}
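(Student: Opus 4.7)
The plan is to verify the three conditions that make $G$ a compact map in the sense of Theorem~\ref{Schauder}: $G$ is well defined, it sends bounded sets to relatively compact sets, and it is continuous. Throughout, $k$ and $\mu$ are extended by constants outside $[0,1]$ (preserving Lipschitz continuity) so that the three subproblems are defined for arbitrary $\phi \in \HO$; the extension is harmless since Lemma~\ref{L:pointwise} guarantees $0 \le \phit \le 1$ a.e.\ on the output anyway. Well-definedness is then a direct consequence of Lemmas~\ref{Lemma_phit}, \ref{Lemma_Tt}, \ref{Lemma_ut}: the three subproblems decouple sequentially, producing first $\tilde\phi_0 \in \HOzero \cap \HOtwo$, then $\tilde T \in \HOzero \cap \HOtwo$, and finally $\tilde\uu \in \Vv \cap \HhOtwo$.

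For compactness of $G$ on bounded sets, let $(\phi_0, T, \uu)$ vary over a bounded subset of $\HOzero \times \HOzero \times \Vv$. Sobolev embedding (valid for $d\le 3$) and Poincar\'e's inequality control the $L^2$-, $L^4$- and $H^{1,2}$-norms appearing on the right-hand sides of the estimates in Lemmas~\ref{Lemma_phit}-\ref{Lemma_ut}, so $(\tilde\phi_0, \tilde T, \tilde\uu)$ stays uniformly bounded in $\HOtwo \times \HOtwo \times \HhOtwo$. The compact Rellich-Kondrachov embedding $\HOtwo \hookrightarrow\!\hookrightarrow \HO$ then yields relative compactness of the image of $G$ in $\HOzero \times \HOzero \times \Vv$.

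Continuity is proved by a standard subsequence-plus-uniqueness argument. Given $(\phi_0^n, T^n, \uu^n) \to (\phi_0, T, \uu)$ strongly in $\HOzero \times \HOzero \times \Vv$, the images $(\tilde\phi_0^n, \tilde T^n, \tilde\uu^n)$ are uniformly bounded in $\HOtwo \times \HOtwo \times \HhOtwo$ by the preceding paragraph, so along a subsequence they converge weakly in $H^{2,2}$ and strongly in $H^{1,2}$ to some limit $(\tilde\phi_0^\ast, \tilde T^\ast, \tilde\uu^\ast)$. The idea is to pass to the limit term-by-term in \eqref{phi_tilde}, \eqref{T_tilde}, \eqref{u_tilde} and identify this limit as $G(\phi_0, T, \uu)$; uniqueness in Lemmas~\ref{Lemma_phit}-\ref{Lemma_ut} then forces the whole sequence, not only the subsequence, to converge. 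The mollification makes the limit passage almost routine: property (v) of Lemma~\ref{Lemma_convolution_mollifier} upgrades strong $L^2$-convergence of the inputs to strong $C^k$-convergence of $\mep * T^n$, $\mep * \uu^n$, $\mep * k(\phi^n)$ and $\mep * \mu(\phi^n)$, while Lipschitz continuity of $\he$ together with $\phi^n \to \phi$ in $L^2$ gives $\he(\phi^n) \to \he(\phi)$ in every $L^p$, $p<\infty$, with the uniform bound $\|\he(\phi^n)\|_{\Linf} \le 1/4$.

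The main technical obstacle is the genuinely nonlinear flux and convection terms such as $\tilde T^n \jjp^n$, $\tilde\uu^n \otimes \jjp^n$ and $(\mep * \uu^n)\cdot\nabla(\tilde\eta^n \tilde T^n)$, which couple outputs (only weakly convergent in $H^{2,2}$) with other outputs or mollified inputs. These are resolved by a one-strong-one-weak principle: in each product, one factor converges strongly (in $L^\infty$, $C^k$ or $L^p$ for large $p$) while its partner converges strongly in $H^{1,2}$ or is uniformly bounded in a dual Lebesgue exponent, which is enough to yield strong $L^2$-convergence of the product and hence to pass to the limit against the smooth test functions in \eqref{phi_tilde}-\eqref{u_tilde}. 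This identifies $(\tilde\phi_0^\ast, \tilde T^\ast, \tilde\uu^\ast) = G(\phi_0, T, \uu)$ and completes the proof of continuity.
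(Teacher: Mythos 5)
Your proposal is correct, and its first two ingredients coincide with the paper's proof: well-definedness follows from Lemmas~\ref{Lemma_phit}, \ref{Lemma_Tt} and~\ref{Lemma_ut}, and the paper likewise shows that $G$ maps bounded sets of $\mathfrak{B}^1=\HOzero\times\HOzero\times\Vv$ into bounded sets of $\HOtwo\times\HOtwo\times\HhOtwo$, from which compactness follows by Rellich--Kondrachov. Where you genuinely diverge is the continuity step. The paper proves continuity \emph{directly}: it subtracts the defining equations for the two inputs, tests with the differences, and shows strong convergence of the outputs in $H^{1,2}$; to control $\normLOthree{\jj_{p,k}-\jj_p}$ in the temperature equation it must first establish strong $H^{2,2}$-convergence of $\phit_{0,k}$, which in turn requires the delicate term $\normLO{(\he'(\phi_k)-\he'(\phi))\nabla\phi}$ to be handled by a dominated-convergence/contradiction argument. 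Your route -- uniform $H^{2,2}$ bounds, extraction of a subsequence converging weakly in $H^{2,2}$ and strongly in $H^{1,2}$, term-by-term identification of the limit (mollified factors converge in every $H^{k,q}$ by Lemma~\ref{Lemma_convolution_mollifier}.(\ref{Lemma_convolution_mollifier.v}), outputs converge strongly in $H^{1,2}$ and $L^6$), and then uniqueness from Lemmas~\ref{Lemma_phit}, \ref{Lemma_Tt}, \ref{Lemma_ut} -- bypasses both the strong $H^{2,2}$-convergence and the $\he'$-difference term altogether, at the price of a purely qualitative (non-quantitative) continuity argument, whereas the paper's computation yields explicit difference estimates. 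Two points you should make explicit to close the argument: (i) the uniqueness step should be phrased as ``every subsequence has a further subsequence whose image converges to $G(\phi_0,T,\uu)$, hence the full sequence converges''; and (ii) when identifying the limit, the limiting identities are first obtained for $C_0^\infty$ test functions and must be extended to all of $\HOzero$ (respectively $\Vv$) by density, which is legitimate because the limit lies in $H^{2,2}$ so all the nonlinear terms are in $L^2$. Your preliminary remark about extending $k$ and $\mu$ by constants outside $[0,1]$ is also a sensible way of making $G$ well defined on all of $\mathfrak{B}^1$, as the Leray--Schauder theorem requires, and it preserves the coercivity since $k\ge k_0$, $\mu\ge\mu_0$ still hold.
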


\begin{proof}
Fix an $0<\varepsilon \leq 1/4$.
$G$ is well defined thanks to Lemmas \ref{Lemma_phit}, \ref{Lemma_Tt} and \ref{Lemma_ut}.
To show compactness of the mapping,
we proceed in two steps. We first show that $G$ is continuous from 
$\mathfrak{B}^1:=\HOzero \times \HOzero \times \Vv $ into $ \mathfrak{B}^1$. Secondly, 
we will show that $G$  maps bounded sets from $\mathfrak{B}^1$  into bounded sets 
of $\mathfrak{B}^2 := \HOtwo \times \HOtwo \times \HhOtwo \cap \mathfrak{B}^1$. 

To prove the continuity of $G$, let $\{ (\phi_k,T_k, \uu_k)\}_k$ be a sequence in $\mathfrak{B}^1$ such that 
$(\phi_k,T_k, \uu_k) \to  (\phi,T,\uu)$ in $\mathfrak{B}^1 $, i.e., $\phi_k  \to \phi$, $T_k \to T$ in $\HO$ and $\uu_k \to \uu$ in $\Vv$.
Let  $(\phit_{0,k},\Tt_k,\uut_k)=G(\phi_{0,k},T_k,\uu_k)$ and $(\phit_{0},\Tt,\uut)=G(\phi_{0},T,\uu)$.
Subtracting Eq. \eqref{phi_tilde} for $\phit_k$ and $\phit$ and noting that $\phit_{D,k}=\phit_{D}$, we obtain 
\begin{align} \label{phidif}
\begin{split}
\intO \nabla (\phit_{0,k}-\phit_{0})\cdot \nabla \psi \dx={}&\intO \phit_k \,\mep * ( \uu_k-  \uu) \cdot \nabla \psi \dx
+ \intO (\phit_{0,k}-\phit_{0})\mep * \uu  \cdot \nabla   \psi \dx \\
&-  \intO \big( (\he(\phi_k)-\he(\phi)) \nabla \mep*T_k+\he(\phi) \nabla (\mep*(T_k-T) \big) \cdot \nabla \psi \dx.
\end{split}
\end{align}
Taking $\psi=\phit_{0,k}-\phit_{0}$, noting that the second integral on the right hand side is anti-symmetric and using H\"older's inequality, we obtain
\begin{align*}
\normLO{\nabla(\phit_{0,k}-\phit_{0})} \Cle{}& \normLOthree{\phit_k}\normLOsix{\mep*( \uu_k- \uu)}\\
& + \normLO{\he(\phi_k)-\he(\phi)}\normLOinf{\nabla \mep*T_k}
 + \normLOinf{\nabla \mep}\normLO{T_k-T}.
\end{align*}
Lemma~\ref{Lemma_phit} implies that $\{\normLOthree{\phit_k}\}_{k\in\nz}$ is bounded, 
$|\he(\phi_k)-\he(\phi)|\leq |\phi_k-\phi|$ due to the Lipschitz continuity of $\he$. 
Using Lemma~\ref{Lemma_convolution_mollifier} we arrive at
\begin{align*}
\normLO{\nabla (\phit_{0,k}-\phit_{0})}\Cle {}& \normLOinf{\nabla \mep}  \normLOsix{\uu_k-\uu} \\
  &+ \normLO{\phi_k-\phi} \normLOinf{\nabla \mep*T_k}+ C \normLOinf{\nabla \mep} \normLO{T_k-T}.
\end{align*}
Now, $\normLOinf{\nabla \mep*T_k }$ is bounded due to Lemma~ \ref{Lemma_convolution_mollifier}(\ref{Lemma_convolution_mollifier.v}), whence 
$\normHO{\phit_{k}-\phit} \to 0$ as $k \to \infty$.

The next step is to show that
$\normLO{\nabla(\Tt_k-\Tt)}\rightarrow 0 $ as $k \to \infty$.
We will first show that $\normLOthree{\jj_{p,k}-\jj_{p}} \rightarrow 0 $, which will 
be a consequence of the fact that $\normHOtwo{\phit_k-\phit}\rightarrow 0 $ as 
$k \to\infty$.

In order to show that $\normHOtwo{\phit_{0,k}-\phit_{0}}\to 0$ as $k\to\infty$, we look at the strong form of Eq.~\eqref{phidif}:
\begin{align*}
-\Delta (\phit_{0,k}-\phit_{0})={}&\Div\big( - \phit_k\, \mep *( \uu_k- \uu)  - (\phit_{0,k}-\phit_{0})\mep* \uu \\\
&+(\he(\phi_k)-\he(\phi)) \nabla \mep*T_k+\he(\phi)\nabla \mep *(T_k-T) \big).
\end{align*}
From Lemma \ref{Lemma:regularity} we have
\begin{align*}
\normHOtwo{\phit_{0,k}-\phit_{0}} \Cle {}&
 \normLO{\Div\big(  \phit_k\, \mep*( \uu_k- \uu) + (\phit_{0,k}-\phit_{0})\mep*\uu \big)} \\
&+\normLO{\Div\big((\he(\phi_k)-\he(\phi))\nabla \mep*T_k+\he(\phi)\nabla \mep* (T_k-T)\big)}\\
\le{}& \underbrace{\normLO{ \phit_k\Div\big(\mep*(\uu_k- \uu)\big)  }}_{=\RM{1}} 
+\underbrace{\normLO{ \nabla \phit_k\cdot\mep*( \uu_k- \uu) }}_{=\RM{2}}\\
&+\underbrace{\normLO{(\phit_{0,k}-\phit_{0})\Div\big(\mep*\uu \big)}}_{=\RM{3}}
+\underbrace{\normLO{ \nabla(\phit_{0,k}-\phit_{0})\cdot\mep* \uu) }}_{=\RM{4}}\\
&+\underbrace{\normLO{\nabla (\he(\phi_k)-\he(\phi))\cdot \nabla \mep*T_k }}_{=\RM{5}}+\underbrace{\normLO{(\he(\phi_k)-\he(\phi))\Delta \mep*T_k }}_{=\RM{6}}\\
&+\underbrace{\normLO{\nabla \he(\phi) \cdot \nabla \mep*(T_k-T)}}_{=\RM{7}}
 +\underbrace{\normLO{ \he(\phi)\Delta \mep* (T_k-T)  }}_{=\RM{8}}.
\end{align*}
Observe that $\RM{1}=\RM{3}=0$ because  $\mep* \uu_k$ and $\mep* \uu$ are divergence free, furthermore $\RM{2},\RM{4}, \RM{6}$ and $\RM{8}$ go to  zero as $k \to \infty$, due to the  Lipschitz continuity of $\he$ and Lemma~\ref{Lemma_convolution_mollifier} as seen above. 

Since $\he'(\phi) \le 1$ is bounded for every $\varepsilon>0$, we have that 
\begin{align*}
\RM{7}\leq  \normLO{ \he'(\phi)\nabla \phi }\normLOinf{\nabla \mep*(T_k-T) }  
\leq C_\varepsilon \normLO{ \he'(\phi)\nabla \phi }\normLO{T_k-T  },
\end{align*} 
so that $\RM{7} \rightarrow 0$ as $k\to \infty$, where again we used Lemma~\ref{Lemma_convolution_mollifier}.(\ref{Lemma_convolution_mollifier.v}).

We can bound $\RM{5}$ as follows:
\begin{align*}
\RM{5}&\leq\normLOinf{ \nabla \mep*T_k}\bigg(\underbrace{\normLO{(\he'(\phi_k)-\he'(\phi))\nabla \phi}}_{=\RM{5.\RMn{1}}_k}
+\underbrace{\normLOinf{\he'(\phi_k)}}_{\leq 1}\underbrace{\normLO{\nabla (\phit_k-\phit)}}_{\xrightarrow[k\to\infty]{} 0}\bigg).
\end{align*}
In order to show the convergence of $\RM{5.\RMn{1}}_k$, we rewrite
\begin{align*}
\RM{5.\RMn{1}}_k^2&=\intO|\he'(\phi_k)-\he'(\phi)|^2|\nabla\phi|^2=\intO\big|\he'(\phi_k)|\nabla\phi|-\he'(\phi)|\nabla\phi|\big|^2
\end{align*}
and notice that $|\he'(\phi_k)| |\nabla\phi|\leq |\nabla\phi| \in \LOtwo$ due to the bound of $\he'$.

We now prove that $\RM{5.\RMn{1}}_k^2 \to 0$ by contradiction.
If this were not true, there would exist a subsequence such that $\RM{5.\RMn{1}}_{k_j}^2 > \delta$ for some $\delta > 0$. Since $\phi_{k_j}\strong \phi$ in  $\LOtwo$, there exists 
a further subsequence, which we still call $\phi_{k_j}^n$ that converges to $\phi$ 
almost everywhere. Since $\he'$ is continuous we know that 
$\he'(\phi_{k_j}^n)|\nabla \phi|\strong \he'(\phi)|\nabla \phi|$ a.e.. So we can apply Lebesgue's dominated convergence theorem to the subsequence to conclude that $\intO|\he'(\phi_{k_j}^n)-\he'(\phi)|^2|\nabla\phi|^2\xrightarrow[]{}  0$, which is a contradiction.

Summarizing, we have $\normHOtwo{\phit_{0,k}-\phit_0}\rightarrow 0$ as $k\to\infty$, 
and we will show that 
$\normLOthree{\jj_{p,k}-\jj_{p}} \rightarrow 0$. Observe that
\begin{align*}
|\jj_{p,k}-\jj_{p}|=|\nabla (\phit_k-\phit)+(\he(\phi_k)-\he(\phi))\nabla\mep*T_k 
+\he(\phi)\nabla\mep*(T_k-T)|,
\end{align*}
therefore
\begin{align*}
 \normLOthree{\jj_{p,k}-\jj_{p}} \leq{} \normHOtwo{\phit_{0,k}-\phit_{0}}+\normLOsix{\phi_k-\phi}\normLOsix{\nabla\mep*T_k}
+\frac{1}{4}\normLOthree{\nabla\mep*(T_k-T)},
\end{align*}
which goes to zero due to Sobolev embeddings and Lemma~\ref{Lemma_convolution_mollifier}.(\ref{Lemma_convolution_mollifier.v}).

Now subtracting Eq.\ \eqref{T_tilde} for $\phit_k, \Tt_k $ and $\phit, \Tt$, rewriting $\nabla(\etat\Tt)\varphi = \nabla(\etat\Tt\varphi) - \etat\Tt\nabla\varphi$ and using that $\nabla \cdot \jj_p = \nabla\cdot\jj_{p,k} = 0$ we get
\begin{align*}
\intO\mep* k(\phi_k)&\nabla (\Tt_k-\Tt) \cdot \nabla \varphi \dx \\
={}&  \intO (\Tt_k-\Tt)\jj_{p,k}\cdot \nabla \varphi \dx+  \intO (\Tt_k-\Tt)\mep*\uu_k \etat_k\cdot \nabla \varphi \dx \\
& -\intO  \mep*(k(\phi_k)-k(\phi))\nabla \Tt \cdot \nabla \varphi \dx +\intO \Tt (\jj_{p,k}-\jj_{p}) \cdot \nabla \varphi \dx\\
&+ \intO \Tt\,\mep*(\uu_k-\uu) \etat_k\cdot \nabla \varphi \dx+ \intO (\etat_k-\etat)\Tt\,\mep*\uu\cdot \nabla \varphi \dx.
\end{align*}
Taking $\varphi=\Tt_k-\Tt$ and applying Eq.~\eqref{j_phisquared} we conclude that 
the first two integrals vanish. Noting that $\Tt \in \HOtwozero$ and making use of 
$\normLOthree{\mep*(k(\phi_k)-k(\phi))} \leq C_\varepsilon \normLOthree{k(\phi_k)-k(\phi)}$
and then using the  Lipschitz continuity of $k$ and 
$\eta=1+\phi$ in the last summand yields
\begin{align*}
k_0 \normLO{\nabla(\Tt_k-\Tt)} \Cle {}& \normLsix{\nabla \Tt} 
\normLOthree{\phi_k-\phi} 
+\normLOsix{\Tt}\normLOthree{\jj_{p,k}-\jj_{p}}\\
&+ \normLOthree{\Tt\etat_k}\normLOsix{\uu_k-\uu} 
+ \normLOthree{\phit_k-\phit}\normLOsix{\Tt}\normLOinf{ \mep*\boldsymbol{E}( \uu)} \\
&\xrightarrow[k\to\infty]{} 0.
\end{align*}
A similar computation for Eq.\ \eqref{u_tilde} yields $\normLO{\nabla(\uut_k-\uut)}\strong 0$ as $k\to\infty$.

Having established the continuity of $G$ from $\mathfrak{B}^1$ into $\mathfrak{B}^1$ we now show that $G$ maps bounded sets in $\mathfrak{B}^1$ into  bounded sets of 
$\mathfrak{B}^2$. 
For $\phi \in \HO$, $ T \in \HOzero$, $\uu \in \Vv$, we already know that $\phit \in \HOtwo$ is a weak solution to $-\Delta \phit =\Div (\he(\phi)\nabla \mep*T)-\Eeu\cdot \nabla \phit$ in $\Omega$, $\phit = b$ on $\partial\Omega$. 
Furthermore, we know from the proof of Lemma~\ref{Lemma_phit} that the right-hand side $\Div (\he(\phi)\nabla \mep*T) - \Eeu\cdot \nabla \phit$ belongs to $\LOtwo$, so by elliptic regularity we conclude that 
$\normHOtwo{\phizt}\leq C_\varepsilon \normLO{\Div (\he(\phi)\nabla \mep*T) - \Eeu\cdot \nabla \phit} \Cle C_\varepsilon \| (\phi,T,\uu)\|_{\mathfrak{B}^1}$. We similarly have that $\Tt \in \HOtwo$ is a weak solution to Eq.~\eqref{reg_T} and we know from the proof of Lemma~\ref{Lemma_Tt}, that $\tilde{f} \in \LOtwo$ and also  $\normHOtwo{\Tt}\leq C_\varepsilon \normLO{\tilde{f}} \Cle C_\varepsilon \| (\phi,T,\uu)\|_{\mathfrak{B}^1}$. The bound for $\normHOtwo{\uut}$ was stated in Lemma~\ref{Lemma_ut}. 
\end{proof}

We are now ready to state and prove the main result of this section.

\begin{theorem}
For each $0<\varepsilon\leq 1/4$, there exists a solution $(\phie,\Te, \uue)$ to
\begin{align}
\begin{split}
\phie \in \HO: &\intO \mep*\uue \cdot\nabla \phie \psi \dx +\intO \nabla \phie \cdot \nabla \psi \dx  \\[4pt]
&+ \intO \he(\phie) \nabla \mep*\Te \cdot \nabla \psi \dx=0, \quad \forall \psi \in C^\infty_0(\Omega),
\end{split}\label{phi_epsilon}\\
\begin{split}
\Te \in \HOzero: &\intO \mep*k(\phi^\varepsilon) \nabla \Te \cdot \nabla \varphi \dx + \intO \mep*\uue \cdot\nabla (\etae \Te) \varphi \dx \\[4pt]
&- \intO \Te \jje \cdot \nabla \varphi \dx=\intO f \varphi \dx, \quad \forall \varphi \in C^\infty_0(\Omega),
\end{split}\label{T_epsilon}\\
\begin{split}
\uue \in \Vv: &\intO \frac{\mep*\mu(\phie)}{2} 
D(\uue) : D(\vv) \dx  +\intO \mep* \uue \cdot \nabla (\rhoe\uue) \cdot\vv \dx \\[4pt]
&- \intO \uue \otimes \jje: \nabla \vv \dx +\intO \beta T^\varepsilon \ee_g \cdot \vv \dx  =\intO \gb \cdot \vv \dx,  \quad \forall \vv \in  \boldsymbol{C}^\infty_0(\Omega) \text{ with } \Div \vv =0
\end{split}\label{u_epsilon}
\raisetag{3\normalbaselineskip}
\end{align}
with $\phie=\phie_0+\phi_D$, $\phie_0 \in \HOzero$ and $\jje=-\nabla \phie -\he(\phie) \nabla \mep*\Te$.
Moreover, the following pointwise bounds hold for $\phie$:
$$
 0\leq \phie \leq 1 \quad\text{a.e.}
$$
\end{theorem}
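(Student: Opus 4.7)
The strategy is to apply the Leray--Schauder theorem (Theorem~\ref{Schauder}) to the mapping $G:\mathfrak{B}^1\to\mathfrak{B}^1$ with $\mathfrak{B}^1:=\HOzero\times\HOzero\times\Vv$ defined in~\eqref{P:regularized}, because a fixed point of $G$ is by construction precisely a triple $(\phie_0,\Te,\uue)$ solving \eqref{phi_epsilon}--\eqref{u_epsilon}. Compactness of $G$ on $\mathfrak{B}^1$ is already furnished by Lemma~\ref{L:compact}, and the pointwise bounds $0\le\phie\le 1$ will follow at once from Lemma~\ref{L:pointwise} applied at the fixed point (where input and output of $G$ coincide, so that $\phie=\phit$). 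What remains is the uniform a~priori bound
\[
\|(\phiz,T,\uu)\|_{\mathfrak{B}^1}<M \qquad \text{whenever } (\phiz,T,\uu)=\sigma G(\phiz,T,\uu),\ \sigma\in[0,1],
\]
with $M$ depending on the data and $\varepsilon$, but not on $\sigma$.

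The derivation of this bound is obtained by chaining the source-only estimates of the three preceding lemmas, exploiting the scaling $\phizt=\phiz/\sigma$, $\Tt=T/\sigma$, $\uut=\uu/\sigma$ that follows from the fixed-point identity (the case $\sigma=0$ being trivial). Lemma~\ref{Lemma_Tt} first gives $\normLO{\nabla\Tt}\le C\normLO{f}$, hence
\[
\normLO{\nabla T}=\sigma\normLO{\nabla\Tt}\le C\normLO{f}.
\]
Inserting this into Lemma~\ref{Lemma_ut} and applying Poincar\'e's inequality yields $\normLO{\nabla\uu}=\sigma\normLO{\nabla\uut}\le C(\normLO{\gb}+\normLO{f})$. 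Finally, Lemma~\ref{Lemma_phit} combined with the Sobolev embedding $\HOzero\hookrightarrow L^4(\Omega)$, valid for $d=2,3$, bounds $\normLO{\nabla\phizt}$ in terms of $\normLO{\nabla T}$, $\|b\|_{H^{1/2,2}(\partial\Omega)}$ and $\normLO{\nabla\uu}$, so that $\normLO{\nabla\phiz}=\sigma\normLO{\nabla\phizt}$ is also bounded uniformly in $\sigma$. Collecting these three estimates produces the constant $M=M(\normLO{f},\normLO{\gb},\|b\|_{H^{1/2,2}(\partial\Omega)},\varepsilon)$, and Theorem~\ref{Schauder} then delivers a fixed point $(\phie_0,\Te,\uue)\in\mathfrak{B}^1$ of $G$.

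The genuinely delicate step---the thermodynamic cancellation $b_T[\Tt,\Tt]=\int_\Omega \mep*k(\phi)|\nabla\Tt|^2$ (and its analogue for the momentum equation), which rests on the identity~\eqref{j_phisquared} derived from~\eqref{phi_tilde} tested with $\psi^2$---has already been absorbed into the proofs of Lemmas~\ref{Lemma_Tt} and~\ref{Lemma_ut}. Without it the $\Te$- and $\uue$-estimates would carry uncontrolled dependence on the input $(\phiz,T,\uu)$ and the Leray--Schauder bound would fail; with it, the a~priori bound above reduces to bookkeeping, and the only remaining observation is that at a genuine fixed point $\phit=\phie$, so Lemma~\ref{L:pointwise} applies directly and gives the pointwise inequalities $0\le\phie\le 1$ a.e.
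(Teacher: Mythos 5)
Your proposal is correct and follows essentially the same route as the paper: Leray--Schauder applied to $G$, with compactness supplied by Lemma~\ref{L:compact}, the uniform bound for solutions of $(\phiz,T,\uu)=\sigma G(\phiz,T,\uu)$ obtained by chaining the estimates of Lemmas~\ref{Lemma_Tt}, \ref{Lemma_ut} and~\ref{Lemma_phit} (your explicit $\sigma$-scaling is just a more detailed rendering of the same bookkeeping), and the pointwise bounds $0\le\phie\le1$ from Lemma~\ref{L:pointwise} at the fixed point.
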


\begin{proof}
The aimed solution is a fixed point of the mapping $G$ defined at the beginning of this section. From Lemma~\ref{G_continuous_bounded} this mapping is compact.
We now notice that if $(\phiz,T,\uu)=\sigma G(\phiz,T,\uu)$ for some 
$\sigma \in [0,1]$, then, 
owing to Lemmas~\ref{Lemma_phit}, \ref{Lemma_Tt} and~\ref{Lemma_ut}
\begin{align*}
\normHO{\phiz} &\le C_\varepsilon \big( \normHO{T} + \normHO{\uu} + \norm{b}_{H^{1/2,2}(\partial \Omega)} \big), \\
\normHO{T} &\le C \normLO{f} , \\
\normHO{\uu} &\le C \big( \normLO{g} + \normHO{T} \big),
\end{align*}
with constants $C, C_\varepsilon$ that are independent of $\phiz$, $T$, $\uu$ and $\sigma$.
Therefore, we have that
\begin{align*}
\|{(\phiz,T,\uu)}\|_{\mathfrak{B}^1}
= \|{(\phiz,T,\uu)}\|_{H^{1,2}(\Omega)\times H^{1,2}(\Omega)\times \Vv}
\leq \tilde C_\varepsilon,
\end{align*}
where $\tilde C_\varepsilon$ depends on the data and $\varepsilon$, but not on  $\phiz$, $T$, $\uu$ and $\sigma$.
Hence, all the assumptions of the Leray-Schauder theorem hold for $G$, and the assertion thus follows.

The pointwise bounds on $\phie$ follows in the same way as in the proof of 
Lemma \ref{L:pointwise}.
\end{proof}

\subsection{Convergence to a solution}

In this section we will prove the existence of a weak solution to the original problem, stated in Definition~\ref{Def_weak}. We will show that there exists a sequence $\varepsilon_n \to 0$ such that the solutions of the mollified problems converge to a solution of the original problem.

From Lemmas~\ref{Lemma_Tt} and~\ref{Lemma_ut}, we have that  $\normHone{\Te}$ and $\normV{\uue}$  are bounded independently of $\varepsilon$. In order to conclude weak $\Hone$ and strong $\Ltwo$ convergence we only need to check if this also holds for $\normHone{\phie}$. 
We rewrite $\nabla \mep*\Te= \mep*\nabla \Te$ which we can do by Lemma~\ref{Lemma_convolution_mollifier} because $\Te \in \HO$. Since $ \COinfzero$ is dense in $\HOzero$, we can test~Eq.\ \eqref{phi_epsilon} with $\psi=\phie_0$. This results in 
\begin{align*}
\intO \nabla\phie_0 \cdot \nabla\phie_0 \dx = -\intO \mep*\uue\cdot \nabla\phi_D\phie_0\dx
- \intO \nabla\phi_D \nabla\phie_0 \dx- \intO \he(\phie)\, \mep*\nabla \Te\cdot \nabla\phie_0 \dx,
\end{align*}
which implies that
\begin{align*}
\normLO{\nabla\phie_0} \Cle \underbrace{\normLOinf{\he(\phie)}}_{\leq\frac{1}{4}}\underbrace{\normLO{\mep*\nabla \Te }}_{\stackrel{\text{Lemma\ } \ref{Lemma_convolution_mollifier}}{\leq C}\normLO{\nabla \Te}}
+\normHO{\uue}\norm{b}_{H^{1/2,2}(\partial \Omega)} + \norm{b}_{H^{1/2,2}(\partial \Omega)} .
\end{align*}
Therefore, from Lemmas~\ref{Lemma_Tt} and~\ref{Lemma_ut}
\begin{align*}
\normLO{\nabla\phie} \Cle \normLO{f}+\normLO{\gb}\norm{b}_{H^{1/2,2}(\partial \Omega)}+\norm{b}_{H^{1/2,2}(\partial \Omega)}.
\end{align*}
We have thus proved that $\normHO{\phie}$, $\normHO{\Te}$ and  $\normV{\uue}$ are uniformly bounded and thus there exists a sequence $\varepsilon_n\strong 0$ and $(\phi, T, \uu) \in H^{1,2}(\Omega)\times H^{1,2}(\Omega)\times \Vv$ such that
\begin{align}\label{strong}
&\phi^{\varepsilon_n}\strong \phi \text{ in } \LOtwo \text{ and }\LOthree, \quad  &T^{\varepsilon_n}\strong T \text{ in } \LOtwo, \quad \quad \quad \quad&\uu^{\varepsilon_n}\strong \uu \text{ in } \LlOtwo,\\
\label{weak}
&\phi^{\varepsilon_n} \weak \phi \text{ in } \HO, \quad  &T^{\varepsilon_n}\weak T \text{ in } \HO, \quad \quad \quad \quad&\uu^{\varepsilon_n}\weak \uu \text{ in } \Vv.
\end{align}

To see that $(\phi, T, \uu)$ is a solution to the nonlinear problem \eqref{phi_orig}-\eqref{u_orig}, we use the strong $\Ltwo$- and weak $\Hone$-convergence and show that the expressions in~\eqref{phi_epsilon}--\eqref{u_epsilon} converge to the corresponding ones in~\eqref{phi_orig}--\eqref{u_orig} as $\varepsilon\strong0$. To simplify the notation the sequences $\phi^{\varepsilon_n}$, $T^{\varepsilon_n}$ and $\uu^{\varepsilon_n}$ will be denoted by $\phie$, $\Te$ and $\uue$ in the following.

First, for a fixed $\psi \in C^\infty_0(\Omega)$  we subtract the left-hand side of~Eq.\ \eqref{phi_orig} from the left-hand side of~Eq.\ \eqref{phi_epsilon} to obtain:
\begin{align*}
\underbrace{\intO ( \mep  \uue\cdot \nabla \phie -\uu\cdot \nabla \phi ) \psi \dx}_{=\RM{1}} +\intO (\nabla \phie  -\nabla \phi )\cdot \nabla \psi \dx 
+\underbrace{\intO( \he(\phie) \mep* \nabla \Te-h(\phi)\nabla T) \cdot \nabla \psi \dx}_{=\RM{2}}.
\end{align*}
The second integral goes to zero because $\phie\weak\phi$ in $\HO$. We can bound $\RM{1}$ as follows:
\begin{align*}
\RM{1}&\leq \big|\intO( \mep*\uue-\mep*\uu)\cdot \nabla \phie  \psi \dx\big|
+\big|\intO (\mep*\uu-\uu) \cdot \nabla \phie \psi\dx\big|+\big|\intO \big( \uu\cdot\nabla \phie - \uu\cdot\nabla \phi  \big)\psi\dx\big|.
\end{align*}
The first and second terms go to zero due to Lemma~\ref{Lemma_convolution_mollifier}, the fact that $\uue \to \uu$ in $\LOtwo$ and the uniform bound on $\normLO{\nabla \phie}$. The third one converges to zero because of the weak convergence of $\phie$ to $\phi$ in $\HO$.
The remaining term $\RM{2}$ can be rewritten as 
\begin{align*}
\RM{2}&\leq\underbrace{\big|\intO (\he(\phie)-h(\phi))\mep*\nabla \Te\cdot \nabla \psi \dx\big|}_{=\RM{2.\RMn{1}}}\\
&\quad+\big|\intO h(\phi)\nabla (\mep* \Te- \Te)\cdot \nabla \psi \dx\big|+\big|\intO h(\phi)\nabla (\Te- T)\cdot \nabla \psi \dx\big|.
\end{align*}
Here, the second integral goes to zero because of Lemma~\ref{Lemma_convolution_mollifier} and the fact that $\normHO{\Te}$ is uniformly bounded. The third integral goes to zero simply because of the weak convergence  $\Te\weak T$ in $\HO$. After adding $\mp \he(\phi) \mep*\nabla \Te\cdot \nabla \psi$ to the first integral, $\RM{2.\RMn{1}}$ can be estimated as follows:
\begin{align*}
\RM{2.\RMn{1}}\leq{}& \Big(\normLO{\he(\phie)-\he(\phi)} +
\normLO{\he(\phi)-h(\phi)} \Big) \normLO{\mep*\nabla \Te} \normLOinf{\nabla \psi}.
\end{align*}
The first norm goes to zero because of Lipschitz continuity of $\he$ and the strong convergence of $\phie \strong \phi$ in $\LOtwo$. The second one also goes to zero because $|\he(\phi)-h(\phi)| \leq \varepsilon-\varepsilon^2 \to 0$.  
Using Lemma~\ref{Lemma_convolution_mollifier}, $\normLO{\mep*\nabla \Te}$ can be bounded by $\normLO{\nabla \Te}$, which is uniformly bounded. Therefore,~\eqref{phi_orig} holds for any $\psi \in \COinfzero$.

Next, for a fixed $\varphi \in C^\infty_0(\Omega)$ we subtract the left-hand side of~\eqref{T_orig} from the left-hand side of~\eqref{T_epsilon}:
\begin{multline*}
\underbrace{\intO \big( \mep*k(\phi^\varepsilon) \nabla \Te - \keff \nabla T\big) \cdot \nabla \varphi \dx}_{=\RM{1}}
+\underbrace{\intO \big(\mep* \uue \cdot\nabla (\etae \Te) - \uu \cdot\nabla(\eta T) \big) \varphi \dx}_{=\RM{2}}
\\
+\underbrace{\intO  \big(\Te \nabla \phie - T \nabla \phi\big) \cdot \nabla \varphi \dx}_{=\RM{3}} 
+ \underbrace{\intO  \big(\Te  \he(\phie) \mep*\nabla \Te  - T h(\phi)\nabla T\big)\cdot \nabla \varphi \dx}_{=\RM{4}}.
\end{multline*}
$\RM{1}$ can be bounded as follows:
\begin{align*}
\RM{1}&\leq \big|\intO\big( \mep*k(\phi^\varepsilon)-\mep*k(\phi)\big)\nabla \Te \cdot \nabla \varphi \dx\big|\\
&\quad+\big|\intO\big(\mep* k(\phi)-k(\phi)\big)\nabla \Te \cdot \nabla \varphi\dx\big|+\big|\intO  k(\phi) (\nabla \Te-\nabla T )\cdot \nabla \varphi\dx\big|.
\end{align*}
The first and second terms go to zero due to Lemma~\ref{Lemma_convolution_mollifier}, the Lipschitz continuity of $k$ and the uniform bound on $\normLO{\nabla \Te}$.
The third one converges to zero because $\Te$ converges weakly to $T$ in $\HO$.

In $\RM{2}$, we integrate by parts to obtain
\begin{align*}
\RM{2} &=  \intO \big( \mep*\nabla\cdot\uue \etae\Te - \nabla\cdot \uu \eta T \big) \varphi \dx 
+ \intO \big( \mep * \uue \,\etae\, \Te - \uu \,\eta\, T )\cdot \nabla \varphi \dx,
\end{align*}
and the first term vanishes because $\Div \uu = \Div \uue = 0$. 
We now add and subtract mixed terms to bound $\RM{2}$ as follows:
\begin{align*}
|\RM{2}|\leq {}&  \big| \intO \mep * \uue \etae (\Te - T ) \cdot \nabla \varphi \dx \big|
+ \big| \intO \mep * \uue (\etae - \eta) T  \cdot \nabla \varphi  \dx \big| 
\\
&+ \big| \intO \mep * (\uue - \uu)\eta\, T  \cdot \nabla \varphi   \dx \big|
+ \big| \intO  (\mep*\uu - \uu)\eta\, T  \cdot \nabla \varphi \dx \big|.
\end{align*}
All these terms converge to zero as $\varepsilon \to 0$ due to the strong convergence of $\phie$, $\Te$, $\uue$ to $\phi$, $T$, $\uu$ in $\LOtwo$, using the fact that $\etae = 1+\phie$ and $\eta = 1 + \phi$ are bounded in $\LOinf$.

To see that $\RM{3}$ also converges to zero the integral is split up into two parts:
\begin{align*}
\RM{3}\leq \bigg|\intO(\Te-T)\nabla\phie\cdot \nabla \varphi \dx\bigg|+ \bigg|\intO T\nabla(\phi-\phie)\cdot \nabla \varphi \dx\bigg|,
 \end{align*}
and these go to zero simply due to the weak convergence in $\HO$ and strong convergence in $\LOtwo$ of $\phie$ to $\phi$ and of $\Te$ to $T$.

We now split $\RM{4}$ as follows:
 \begin{align*}
\RM{4}&= \underbrace{\intO \Te \he(\phie) \mep*\nabla (\Te-T) \cdot \nabla \varphi \dx}_{=\RM{4.\RMn{1}}}  
+\underbrace{\intO \big(\Te \he(\phie)\mep* \nabla T -T h(\phi)\nabla T \big)\cdot \nabla \varphi\dx}_{=\RM{4.\RMn{2}}},
\end{align*}
and insert a mixed term in $\RM{4.\RMn{1}}$ to arrive at:
\begin{align*}
\RM{4.\RMn{1}}&\leq\underbrace{\bigg|\intO \big(\Te \he(\phie) -T h(\phi)\big)  \mep*\nabla( \Te-T) \nabla \varphi\dx\bigg|}_{=\RM{4.\RMn{1}.\RMn{1}}} 
+\underbrace{\bigg|\intO T h(\phi) \mep* \nabla (\Te-T)\cdot \nabla \varphi \dx\bigg|}_{=\RM{4.\RMn{1}.\RMn{2}}}.
\end{align*}
We rewrite $\RM{4.\RMn{1}.\RMn{1}}$ as
\begin{align*}
\RM{4.\RMn{1}.\RMn{1}}&\leq\bigg|\intO \big(\Te \he(\phie)\nabla \varphi -T h(\phi)\nabla \varphi\big) \cdot \mep*\nabla( \Te-T) \dx\bigg|\\
&\leq \intO \Big[ |\Te \he(\phie) -\Te \he(\phi)|+ |\Te \he(\phi) -\Te h(\phi)|
+ |\Te h(\phi) -T h(\phi)|\Big]|\nabla \varphi \cdot \mep*\nabla( \Te-T) |\dx\\
&\leq \normLOthree{\he(\phie) -\he(\phi)} \normLOsix{\Te} \normLO{ \mep*\nabla( \Te-T)} \normLOinf{ \nabla \varphi} \\
&\quad +\normLOthree{ \he(\phi) -h(\phi)} \normLOsix{\Te} \normLO{ \mep*\nabla( \Te-T)} \normLOinf{ \nabla \varphi} \\
&\quad+\normLO{\Te-T} \normLO{h(\phi)\nabla \varphi \cdot \mep*\nabla( \Te-T)},
\end{align*}
and notice that $\| \Te \|_{\LOsix} \le \normHO{\Te}$, $\normLO{ \mep*\nabla( \Te-T)}$ and $\normLO{h(\phi)\nabla \varphi \cdot \mep*\nabla( \Te-T)}$ are uniformly bounded, due to Lemma~\ref{Lemma_convolution_mollifier} and the fact that $|h(z)| \le 1/4$.
Since $\he$ is Lipschitz continuous with uniform constant one, $\normLOthree{\he(\phie) -\he(\phi)} \le \normLOthree{\phie -\phi} \to 0$ due to~\eqref{strong}. Finally, the fact that $|\he(\phi)-h(\phi)| \leq \varepsilon-\varepsilon^2$ and the strong convergence of $\Te$ to $T$ in $\LOtwo$, imply that $\RM{4.\RMn{1}.\RMn{1}}\to 0$ with $\varepsilon\to 0$.

We now focus on $\RM{4.\RMn{1}.\RMn{2}}$. Notice that
\begin{align*}
\RM{4.\RMn{1}.\RMn{2}}&=\intO \Big[\mep *\big(T h(\phi)\nabla \varphi \big) - T h(\phi)\nabla \varphi\Big] \cdot \nabla (\Te-T) \dx
+\intO T h(\phi)\nabla \varphi\cdot  \nabla (\Te-T)\dx\\
&\leq \normLO{\mep* \big(T h(\phi)\nabla \varphi\big)  - T h(\phi)\nabla \varphi}
\normLO{\nabla (\Te-T)}
+\intO T h(\phi)\nabla \varphi \cdot\nabla (\Te-T) \dx.
\end{align*}
The first term in the last expression converges to zero due to Lemma~\ref{Lemma_convolution_mollifier} and the fact that $\normLO{\nabla (\Te-T)}$ is uniformly bounded. The second term goes to zero because $ T h(\phi)\nabla \varphi \in \Ltwo$ and $\Te \to T$, weakly in $\HO$.

Lastly, we need to look at $\RM{4.\RMn{2}}$ which will also be split up by adding and subtracting terms:
\begin{align*}
\RM{4.\RMn{2}}&\leq |\intO \big(\Te \he(\phie)\mep* \nabla T  -T h(\phi)\nabla T \big)\cdot \nabla \varphi\dx| \\
&\leq \normLOthree{\he(\phie)-\he(\phi)} \normLOsix{\Te} \normLO{\mep*\nabla T}\normLOinf{ \nabla \varphi}\\
&\quad+\normLOthree{\he(\phi)-h(\phi)} \normLOsix{\Te} \normLO{\mep*\nabla T}\normLOinf{ \nabla \varphi}\\
&\quad+\normLO{\Te-T}\normLOinf{h(\phi)}\normLO{\mep*\nabla T} \normLOinf{\nabla \varphi}\\
&\quad+\normLO{\mep*\nabla T-\nabla T}\normLOinf{h(\phi)}\normLO{T}\normLOinf{\nabla \varphi}.
\end{align*}
This last expression converges to zero thanks to Lemma~\ref{Lemma_convolution_mollifier}, the uniform boundedness of $\normHO{\Te}$, the strong convergence of $\Te$ to $T$ in $\LOtwo$ and of $\phie$ to $\phi$ in $\LOthree$ and the properties of $h$, $\he$ mentioned above.

The convergence of the left-hand side of \eqref{u_epsilon} to the left-hand side of~\eqref{u_orig} can be shown analogously.

We have thus arrived at the main result of this article.

\begin{theorem}[Existence of a weak solution]
\label{existence}
Under the assumptions made in Section~\ref{S:NotationAssumptions} there exists a weak solution $(\phi,T,\uu)$ of problem 
\eqref{stationary_strong_phi}--\eqref{stationary_strong_div_u}
  with boundary conditions \eqref{boundary} in the sense of Definition~\ref{Def_weak}.
\end{theorem}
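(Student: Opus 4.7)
The proof proposal is to assemble the pieces already developed in this section into a convergence argument based on Leray--Schauder plus weak/strong compactness.

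First I would invoke the regularized existence theorem to obtain, for every $0<\varepsilon\le 1/4$, a triple $(\phie,\Te,\uue)$ solving~\eqref{phi_epsilon}--\eqref{u_epsilon} with $0\le\phie\le 1$ a.e. The key point is that the bounds in Lemmas~\ref{Lemma_phit}, \ref{Lemma_Tt} and~\ref{Lemma_ut}, when combined as indicated in the paragraph preceding the theorem, yield $\varepsilon$-independent bounds
\[
\normHO{\phie}+\normHO{\Te}+\normV{\uue}\;\Cle\;\normLO{f}+\normLO{\gb}+\norm{b}_{H^{1/2,2}(\partial\Omega)},
\]
together with the pointwise bound $0\le\phie\le 1$. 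By reflexivity and Rellich--Kondrachov, one can extract a subsequence $\varepsilon_n\to 0$ such that~\eqref{strong} and~\eqref{weak} hold, i.e., strong $\Ltwo$/$\LOthree$ convergence and weak $\HO$ convergence for $\phie,\Te$, and analogously for $\uue$ in $\Vv$. The candidate weak solution is the limit $(\phi,T,\uu)$.

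Next I would verify that this limit belongs to the admissible class of Definition~\ref{Def_weak}. Strong $\Ltwo$ convergence of $\phie$ together with $0\le\phie\le 1$ gives $0\le\phi\le 1$ a.e. Since $\phie-\phi_D\in\HOzero$ is a closed subspace, the weak limit $\phi-\phi_D$ lies in $\HOzero$; similarly $T\in\HOzero$ and $\uu\in\Vv$.

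The main work, and the genuine obstacle, is passing to the limit in the mollified equations~\eqref{phi_epsilon}--\eqref{u_epsilon} for test functions $\psi,\varphi\in\COinfzero$ and $\vv\in\CcOinf$ with $\Div\vv=0$. This is precisely the block of calculations carried out in the subsection preceding the theorem statement: one decomposes each nonlinear term by adding and subtracting mixed expressions, then controls each piece using (a) the mollifier properties in Lemma~\ref{Lemma_convolution_mollifier}, specifically $\|\mep*g-g\|_{L^p}\to 0$ on $\HO$ functions and $\|\mep*g\|_{L^p}\le\|g\|_{L^p}$; (b) the Lipschitz continuity of $k,\mu,\he$ together with the uniform bound $|h-\he|\le\varepsilon-\varepsilon^2$ and $\|\he\|_{L^\infty}\le 1/4$; (c) the strong $\Ltwo$/$\LOthree$ convergence to handle products of $\phie,\Te,\uue$ with bounded/uniformly-bounded gradient factors; and (d) the weak $\HO$ convergence to handle the remaining linear-in-gradient terms. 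The hardest terms are the quadratic-in-gradient contributions $\intO\Te\,\he(\phie)\,\mep*\nabla\Te\cdot\nabla\varphi\dx$ in the heat equation and the analogous $\intO\uue\otimes\jje:\nabla\vv\dx$ in the momentum equation, because they couple weak $\HO$ convergence of $\Te$ (or $\uue$) with a second weakly-convergent gradient factor. These are handled exactly by the $\RM{4}$-decomposition in the text: the mollification effectively transfers derivatives onto a fixed smooth test function, after which strong $\Ltwo$ convergence of the remaining factors suffices.

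Once each term of~\eqref{phi_epsilon}--\eqref{u_epsilon} has been shown to converge to the corresponding term of~\eqref{phi_orig}--\eqref{u_orig}, and noting that $\jjp=-\nabla\phi-h(\phi)\nabla T$ arises as the limit of $\jje=-\nabla\phie-\he(\phie)\mep*\nabla\Te$ in the distributional sense, I conclude that $(\phi,T,\uu)$ is a weak solution in the sense of Definition~\ref{Def_weak}, which completes the proof. No separate treatment of the pressure is needed since test functions are divergence-free.
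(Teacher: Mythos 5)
Your proposal is correct and follows essentially the same route as the paper: existence for the mollified system via Leray--Schauder, the $\varepsilon$-uniform $H^{1,2}$ bounds obtained by testing with $\phie_0$ and using $\|\he\|_{L^\infty}\le\tfrac14$ together with Lemmas~\ref{Lemma_Tt} and~\ref{Lemma_ut}, extraction of a subsequence with the convergences \eqref{strong}--\eqref{weak}, and the term-by-term passage to the limit (including the $\RM{4}$-type treatment of the quadratic gradient terms) exactly as in the discussion preceding the theorem. No substantive differences or gaps to report.
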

\begin{proof}
The Leray-Schauder theorem (Thm.\ \ref{Schauder}) yields the existence of a solution  $(\phie, \Te, \uue)$ to problem\ \eqref{phi_epsilon}--\eqref{u_epsilon}, for each $\varepsilon  > 0$. The arguments at the beginning of this section show that there is a sequence $\varepsilon_n \searrow 0$ such that $(\phi^{\varepsilon_n}, T^{\varepsilon_n}, \uu^{\varepsilon_n}) \to (\phi, T, \uu)$ as $n\to\infty$. 
The discussion preceding the statement of this theorem shows that $ (\phi, T, \uu)$ is a weak solution to our stationary problem, according to Definition~\ref{Def_weak}.
\end{proof}

\begin{remark}[Non-uniqueness]
The stationary Navier-Stokes equations with constant viscosity
are known to have multiple solutions (if the Reynolds number is not too
small), see \cite[Ch. II \S4]{Temam:84}. Therefore, one cannot expect
uniqueness for the stationary system under consideration.
\end{remark}

\section*{Acknowledgements}
Pedro Morin was partially supported by Agencia Nacional de Promoci\'on Cient\'ifica y Tecnol\'ogica, through grants PICT-2014-2522, PICT-2016-1983, by CONICET through PIP 2015 11220150100661, and by Universidad Nacional del Litoral through grants CAI+D 2016-50420150100022LI. A research stay at Universit\"at Erlangen was partially supported by the Simons Foundation and by the Mathematisches Forschungsinstitut Oberwolfach as well as by the DFG--RTG
2339 \emph{IntComSin}.

%
%

\def\cprime{$'$} \def\cprime{$'$} \def\cprime{$'$} \def\cprime{$'$}
  \def\cprime{$'$} \def\cprime{$'$} \def\cprime{$'$} \def\cprime{$'$}

\bibliography{bibliography}
\addcontentsline{toc}{section}{References}
\bibliographystyle{plain}

\vfill

{\small
\begin{minipage}[t]{0.45\textwidth}
\noindent
Eberhard B\"ansch\\
Applied Mathematics III\\
University Erlangen--N\"urnberg\\
Cauerstr. 11\\
91058 Erlangen\\
Germany\\
{\tt baensch@math.fau.de}
\end{minipage}\hfill
}
{\small
\begin{minipage}[t]{0.45\textwidth}
\noindent
Sara Faghih-Naini\\
Scientific Computing\\
University of Bayreuth\\
Universit\"atsstr. 30\\
95444 Bayreuth\\
Germany\\
{\tt Sara.Faghih-Naini@uni-bayreuth.de}
\end{minipage}\hfill
}\\[1cm]
{\small
\begin{minipage}[t]{0.45\textwidth}
\noindent
Pedro Morin\\
Facultad de Ingenier\'ia Qu\'imica\\
Universidad Nacional del Litoral and CONICET \\
Santiago del Estero 2829 \\
S3000AOM Santa Fe \\
Argentina\\
{\tt pmorin@fiq.unl.edu.ar}
\end{minipage}\hfill
}

\end{document}